\documentclass[letterpaper,10pt,conference,twoside]{ieeeconf}

\IEEEoverridecommandlockouts                              % This command is only needed if 
\usepackage{amsmath,amssymb,color,cite}%,showkeys}

\usepackage{graphicx}
\usepackage{dblfloatfix}
\usepackage{latexsym}
\usepackage{algorithm}
\usepackage{algpseudocode}
\usepackage{verbatim}
\usepackage{cite}
\usepackage[latin1]{inputenc}
\usepackage{enumerate}

\usepackage{amsthm}
\usepackage{bbm}

\usepackage[font=footnotesize]{caption}
\usepackage{subcaption}
\usepackage{arydshln}
\usepackage{faktor}
\usepackage{amsfonts}
\usepackage{amsbsy}
\usepackage{bm}
\usepackage{booktabs}
\usepackage{aligned-overset}
\usepackage{extarrows}

\DeclareMathOperator{\rank}{rank}

\newcommand{\norm}[1]{\ensuremath{\| #1 \|}}

\let\leq\leqslant
\let\geq\geqslant
\let\emptyset\varnothing

\newcommand{\Z}{\mathbb{Z}}

\newcommand{\R}{\mathbb{R}}
\newcommand{\C}{\mathbb{C}}

\newcommand{\B}{\mathcal{B}}
\newcommand{\T}{\mathcal{R}}
\newcommand{\M}{\mathcal{M}}
\newcommand{\A}{\mathcal{A}}

\theoremstyle{definition}
\newtheorem{theorem}{Theorem}[section]

\newtheorem{lemma}[theorem]{Lemma}
\newtheorem{corollary}[theorem]{Corollary}

\newtheorem{remark}[theorem]{Remark}

\newtheorem{example}[theorem]{Example}

\DeclareMathOperator{\dif}{dif} 
\renewcommand{\bar}{\overline}

\title{Stability, Contraction, and Controllers for Affine Systems}
\author{
	L. P. Wieringa\thanks{L. P. Wieringa is with the Department of Mathematics, ETH Z\"urich, Switzerland, {\tt\footnotesize lwieringa@student.ethz.ch}. A.~Padoan is with the Department of Electrical and Computer Engineering, University of British Columbia, Canada, {\tt\footnotesize alberto.padoan@ubc.ca}. F. D\"orfler is with the Department of Information Technology and Electrical Engineering, ETH Z\"urich, Switzerland, {\tt\footnotesize doerfler@control.ee.ethz.ch}. J.~Eising is with ENTEG, University of Groningen, the Netherlands, {\tt\footnotesize j.eising@rug.nl}. This work is supported by the SNF/FW Weave Project 200021E\_20397 and the Natural Sciences and Engineering Research Council of Canada (NSERC), grant numbers: RGPIN-2025-06895 and DGECR-2025-00382.},
	A.~Padoan, %
	F. D\"{o}rfler, 
	J. Eising
}
	
\begin{document}

\maketitle

\begin{abstract}
	Recent developments in data-driven control have revived interest in the behavioral approach to systems theory, where systems are defined as sets of trajectories rather than being described by a specific model or representation. However, most available results remain confined to linear systems, limiting the applicability of recent methods to complex behaviors. Affine systems form a natural intermediate class: they arise from linearization, capture essential nonlinear effects, and retain sufficient structure for analysis and design. This paper derives necessary and sufficient conditions independent of any particular representation for three fundamental stability problems for affine behaviors: (i) converse Lyapunov theorems for contraction of input-output systems; (ii) implementability and existence of prescribed contractive references; and (iii) whether these references can be implemented with linear or affine controllers. For the latter, we show that linear controllers suffice for implementing a contractive closed-loop, and and affine controllers are needed for equilibrium placement. 
\end{abstract}

\section{Introduction}
In addition to arising in physical modeling, affine models have recently been used extensively in nonlinear data-based predictive control. Motivated by the use of linearizations of time-varying (e.g. \cite{Berberich_2022,Berberich2024}) or nonlinear systems (e.g. \cite{Naef2025,Beerwerth2025}), these methods employ \textit{data-based representations}, where the set of finite-length trajectories is characterized as all affine combinations of a set of previously collected measurements. Since state measurements are often not available, the use of state-space models is inhibited. To provide convergence guarantees for such predictive control schemes, typical requirements are controllability or stabilizability of the system \cite{rawlings2017model}. However, analysis of these properties for affine models typically relies on the realization of state-space representations, and is not derived from first principles but treated as a corollary of the case of linear systems.

This paper stands in the context of the behavioral approach (see e.g. \cite{Polderman1998}), which considers systems irrespective of their representation. It was recently shown that affine systems exhibit minor but fundamental differences with respect to linear systems. Particularly relevant here are the recent investigations of the fundamental lemma (cf. \cite{willems2005}) in the affine setting \cite{martinelli2022data,Berberich_2022,Padoan2023}. Moreover, the paper  \cite{Padoan2025} detailed realization theory and controllability of such systems. In line with these, this paper focuses on stability theory.

For nonlinear systems in the behavioral setting, we cannot check for stability in a tractable manner. In the case of linear systems with a state-space representation, stability analysis is often done by calculating a quadratic Lyapunov function of the state. If states are not available, the behavioral approach uses \textit{quadratic difference forms (QDFs)} as Lyapunov functions, for which the equivalence between stability and the existence of such a Lyapunov function was proven in continuous-time \cite{Willems1998}, and later extended to discrete-time \cite{Kojima2006}. In the linear case, QDFs have found use in analysis and controller synthesis in the direct data-based settings \cite{Maupong2017, vanWaarde2022, vanWaarde2026}. Since quadratic Lyapunov functions of the state have many uses in nonlinear analysis, QDFs are the natural framework for the analysis of affine systems. 

Once conditions for the analysis are in place, we are interested in the design of controllers for non-autonomous systems. In the behavioral setting, this problem is approached in terms of two related problems. On the one hand, the question of whether a given reference behavior can be \textit{implemented}, and on the other, whether a stable and implementable reference exists. The first of these problems has been studied in full generality \cite{vanderSchaft2003}, but there are no known necessary and sufficient conditions for implementability tailored to specific model classes beyond the linear case. Similarly, the question of whether there exists a stable and implementable reference has only been studied in the linear case \cite{Willems1997, Belur2001, Belur2002}.

\textit{Contributions.} In this paper, we investigate stability for affine systems. Unlike linear systems, affine systems need not contain the origin, making classical stability ill-defined; contraction  (see e.g. \cite{Bullo2024}, \cite{WL-JJES:98}) sidesteps this issue by comparing pairs of trajectories. We prove a number of fundamental results regarding stability and contraction:
\begin{enumerate}
	\item We prove, akin to converse Lyapunov results, that an affine system is contractive if and only if there exists a quadratic difference form as a contraction metric.
	\item We derive necessary and sufficient conditions for a given reference behavior to be implementable using a controller, and for the existence of an affine, implementable, and contractive reference behavior.
	\item We prove the fact that for the implementation of a contractive reference for an affine system, it is sufficient to consider linear controllers, and that using affine controllers allows for the placement of the resulting equilibrium.
\end{enumerate} 
In order to be equally suited to physics-derived affine models and data-based representations, we will work representation free, in the behavioral setting.

\textit{Notation.} We denote by $\Z_+, \Z$, $\R$ and $\C$ the set of nonnegative integers, integers, reals and complex numbers respectively. The set of all maps from $\Z$ to $\R^q$ is denoted $(\R^q)^\Z$. Given $\tau\in \Z$, we denote by $\sigma^\tau$ the map that sends $w\in (\R^q)^\Z$ to $\sigma^\tau w\in (\R^q)^\Z$ given by $\sigma^\tau w(t) = w(t+\tau)$. If $\tau=1$, we omit the superscript, and if $\sigma^\tau w =w$ for all $\tau\in\Z$, we say $w$ is constant. For $t_0,t_1\in \Z$ we define the interval $[t_0, t_1] := \{t\in \Z: t_0\leq t\leq t_1\}$, and if $w\in (\R^q)^\Z$ is a map and $t_0\leq t_1$, then $w_{[t_0, t_1]}$ denotes the vector $\begin{bmatrix}w(t_0)^\top & \cdots & w(t_1)^\top\end{bmatrix}^\top$. For a set $\B\subseteq (\R^q)^\Z$, we define $\B_{[t_0, t_1]} := \left\{w_{[t_0, t_1]} : w\in \B \right\}$. If $X,Y$ are subsets of the same linear space, then $X+Y$ denotes their Minkowski sum, and when $X = \{x_0\}$ we write $x_0 + Y := \{x_0\}+Y$ for convenience. Finally, we denote the set of real polynomial matrices of size $p\times q$ by $\R^{p\times q}[\xi]$. 

\section{Behavioral preliminaries}
In the behavioral approach, a \textit{(dynamical) system} is not defined by a specific representation, but identified with a subset of a space of trajectories called its \textit{behavior}. In this paper, we focus on the particular case where $\B\subseteq (\R^q)^\Z$, that is, discrete-time systems taking values in $\R^q$. Moreover, we consider systems which are \textit{time-invariant}, that is, if $w\in \B$ then $\sigma^tw\in \B$ for all $t\in \Z$. Lastly, we make the (unrestrictive) assumptions that all behaviors under consideration are nonempty and \textit{complete}, i.e., there exists an $L\in \Z_+$ such that for every $w\in (\R^q)^\Z$ we have
\[
\text{$w_{[t, t+L]}\in \B_{[t, t+L]}$ for all $t\in \Z$} \ \implies\ w\in \B.
\]
If the integer $L$ is required in context, we say that the system is \textit{$L$-complete}. 

This paper focuses on the differences between linear and affine systems. For this, we will often need to refer to sets of behaviors. These are referred to as \textit{model classes}. We focus on two model classes of time-invariant and complete behaviors, namely
\begin{itemize}
	\item $\mathcal{L}^q$, the set of behaviors which are \textit{linear}, i.e.,
	\[
	w_1,w_2\in \B, \ \  \alpha,\beta\in \R \ \implies \ \alpha w_1 + \beta w_2\in \B;
	\]
	\item $\A^q$, the set of behaviors which are \textit{affine}, i.e.,
	\[
	w_1,w_2\in \B, \ \ \alpha\in \R \ \implies \ \alpha w_1 + (1-\alpha)w_2\in \B.
	\]
\end{itemize}
Observe that $\mathcal{L}^q\subsetneq \A^q$, as every linear behavior is affine, but the converse does not hold in general.

Like systems in $\mathcal{L}^q$, any system $\B\in\A^q$ can be represented in a number of ways (cf. \cite{Padoan2025}). Examples include affine state-space equations, representations in terms of measured data, or offset kernel representations, i.e.,
\[ \B = \ker_c R(\sigma):=\left\{w\in (\R^q)^\Z : R(\sigma)w = c\right\},\]
where $R\in \R^{p\times q}[\xi]$ and $c\in \R^p$. To make our results apply to each of these cases and avoid relying on a specific representation, we will work \textit{representation-free}, that is, without relying on a representation, whenever possible.

Although linear and affine systems are closely related, there are some important differences between them that make studying the stability of affine systems notably more involved. For instance, affine systems in state-space form need not admit an equilibrium point, whereas for linear systems the origin is always an equilibrium. To study the stability of affine systems, we will thus introduce a number of notions inspired by the analysis of nonlinear systems to the behavioral setting. We say that a system $\B\in \A^q$ is
\begin{itemize}
	\item \textit{autonomous} if for all $w_1,w_2\in \B$, $w_1(t) = w_2(t)$ for $t\leq 0$ implies $w_1 = w_2$;
	\item \textit{contractive} if $\lim_{t\to\infty}(w_1(t)-w_2(t)) = 0$ for all $w_1,w_2\in \B$;
	\item \textit{offset stable} if there exists a constant trajectory $\bar w\in \B$ and $\lim_{t\to\infty} w(t) = \bar w$ for all $w\in \B$.
\end{itemize}
Intuitively, a system is contractive if any two trajectories become indistinguishable, regardless of initial conditions.

Note that offset stable behaviors admit a \textit{unique} constant trajectory. We may therefore say that an offset stable behavior $\B\in \A^q$ with constant trajectory $\bar w\in \B$ is \textit{$\bar w$-stable}. In this notation, $0$-stability coincides with the definition of stability for linear behaviors used in e.g. \cite{Willems1998, Kojima2006}.

We will revisit the following, simplest nontrivial example illustrating the results throughout this paper. 
\begin{example}\label{ex:example}
	Consider the system $\B\subseteq\R^\Z$ consisting of all scalar trajectories $w:\Z\to\R$ such that for all $t\in\Z$,
	\[ 10w(t+2) + 18w(t+1)+9w(t) = 20. \]
	It is straightforward to see that $\B\in \A^1$ and that $\B$ is autonomous. By inspecting a number of trajectories of the system, we can see that each converges to the same constant trajectory. Indeed, rewriting the system centered at this fixed point, and deducing a linear state-space representation, we can verify offset stability using standard techniques. \hfill $\Box$
\end{example} 

However, the methodology of this example has a few downsides: it requires the existence and determination of the constant trajectory and it depends on a specific, non-unique, state realization. These are not immediate in more complex cases, illustrating the limitations of representation-dependent approaches. As such, we address the following problem:
\begin{enumerate}
	\item Given an autonomous system $\B\in\A^q$, provide representation-free necessary and sufficient tests to guarantee offset stability and contractivity. 
\end{enumerate}
Having examined autonomous affine systems, we then study control for \textit{non-autonomous} affine systems. Such a system $\B\in \A^{q+k}$ consists of trajectories which can be partitioned into $q$-dimensional \textit{to-be-controlled} variables and $k$-dimensional \textit{control variables}. In this setting, we investigate our last three problems:
\begin{enumerate}
	\setcounter{enumi}{1}
	\item Given a system $\B\in\A^{q+k}$ and an affine reference $\mathcal{R}\in\A^q$, provide tests for whether we can realize $\T$ by interconnecting $\B$ with an affine controller $\mathcal{C} \in \A^k$. 
	\item Given a system $\B\in\A^{q+k}$, provide necessary and sufficient conditions under which a contractive, implementable reference exists.
	\item Given a system $\B\in\A^{q+k}$, provide necessary and sufficient conditions under which the above can be achieved with a linear or affine controller. 
\end{enumerate}

We address these problems sequentially: the solution to problem 1) informs the tools needed for problems 3) and 4), which are in turn based on problem 2). 

\section{Stability theory for affine behaviors}\label{sec:stab}
In this section we study autonomous affine behaviors with the aim of resolving problem 1). A crucial insight enabling our analysis is the following result from \cite{Padoan2025}, which states that every affine system can be decomposed as the sum of a linear system and an \textit{offset} trajectory.

\begin{lemma}[{\cite[Theorem 3]{Padoan2025}}]\label{lem: lemma about difference behaviors of affine systems}
	Let $\B\in \A^q$. Then
	\begin{enumerate}[(i)]
		\item $\dif(\B)\in \mathcal{L}^q$, where $\dif(\B) := \B-\B$;
		\item $\B = w_0 + \dif(\B)$ for all $w_0\in \B$.
	\end{enumerate}
\end{lemma}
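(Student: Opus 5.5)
The plan is to verify the three defining properties of $\mathcal{L}^q$ for $\dif(\B)$ directly: linearity, time-invariance, and completeness. We then derive (ii) from the affine structure.

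The key observation is that for any fixed $w_0\in\B$ we have $\B - w_0 = \dif(\B)$. The inclusion $\B - w_0\subseteq \dif(\B)$ is trivial. For the converse, take $w_1-w_2$ with $w_1,w_2\in\B$. The affine combination $w_0 + w_1 - w_2 = 2\big(\tfrac12 w_0+\tfrac12 w_1\big) + (1-2)w_2$ is an affine combination of elements of $\B$. Here $\tfrac12 w_0+\tfrac12 w_1\in\B$ by affineness with $\alpha=\tfrac12$, and then applying affineness again with $\alpha=2$ shows the whole expression lies in $\B$. Hence $w_1-w_2 = (w_0+w_1-w_2)-w_0\in \B-w_0$. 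This gives (ii) immediately, since $\B = w_0 + (\B-w_0) = w_0+\dif(\B)$, using that $\B$ is nonempty.

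For (i), linearity follows from the identity $\dif(\B)=\B-w_0$. The set $\B-w_0$ contains $0$ and is closed under affine combinations, because affine combinations commute with translation. A set containing $0$ and closed under affine combinations is a linear subspace: $\alpha v = \alpha v + (1-\alpha)0$, and $v_1+v_2 = 2(\tfrac12 v_1+\tfrac12 v_2)$. Time-invariance holds because $\sigma^t(w_1-w_2)=\sigma^tw_1-\sigma^tw_2$ and $\B$ is shift-invariant.

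Completeness is the only step needing care. Suppose $\B$ is $L$-complete, and let $v$ satisfy $v_{[t,t+L]}\in\dif(\B)_{[t,t+L]}$ for all $t$. Since restriction commutes with translation, $\dif(\B)_{[t,t+L]} = \B_{[t,t+L]} - (w_0)_{[t,t+L]}$. Therefore $(w_0+v)_{[t,t+L]}\in\B_{[t,t+L]}$ for all $t$. By $L$-completeness of $\B$ this gives $w_0+v\in\B$, so $v\in\B-w_0=\dif(\B)$, and $\dif(\B)$ is $L$-complete. I expect no real obstacle here. The only subtle points are the double use of the affine property to obtain closure under the "parallelogram" combination $w_0+w_1-w_2$, and remembering that the restriction map commutes with translation by a fixed trajectory.
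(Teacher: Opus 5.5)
Your proof is correct. The paper itself gives no argument for this lemma. It imports the result verbatim as \cite[Theorem 3]{Padoan2025}, so there is no internal proof to compare against, and your proposal works as a self-contained replacement.

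The key step, $w_0+w_1-w_2 = 2\bigl(\tfrac12 w_0+\tfrac12 w_1\bigr)+(1-2)w_2$, is what turns the paper's two-point affine closure into $\B-w_0=\dif(\B)$. Reducing linearity to ``contains $0$ and is closed under affine combinations'' is also sound.

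Beyond the citation, your route checks membership in the full model class $\mathcal{L}^q$, not just linearity. That includes shift-invariance and nonemptiness (since $0=w_0-w_0$). It also shows that $\dif(\B)$ is $L$-complete with the \emph{same} $L$ as $\B$. The paper tacitly relies on exactly this in Lemma~\ref{cor: contractive iff there exists Lyapunov function for dif(B)}. There $\B$ is assumed $L$-complete, and the degree bound of \cite[Proposition 4.9]{Willems1998} is applied to $\dif(\B)$ to take $\Phi\in\mathbb{S}^{qL}$. Deferring to the reference keeps the paper short but leaves that index bookkeeping implicit.
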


We refer to $\dif(\B)$ as the \textit{difference behavior (of $\B$)}. Our first main result characterizes contractivity of affine systems on the basis of the difference behavior. 

\begin{lemma}\label{lem: contractive iff difference behavior is stable}
	For $\B\in \A^q$, the following are equivalent: 
	\begin{enumerate}[(i)]
		\item $\B$ is contractive;
		\item $\B$ is offset stable;
		\item $\dif(\B)$ is $0$-stable.
	\end{enumerate}
\end{lemma}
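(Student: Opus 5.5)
The plan is to prove the cycle (i)$\Leftrightarrow$(iii), (ii)$\Rightarrow$(i) and (iii)$\Rightarrow$(ii), using Lemma~\ref{lem: lemma about difference behaviors of affine systems} throughout.

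For (i)$\Leftrightarrow$(iii), note that by definition every $v\in\dif(\B)$ has the form $v=w_1-w_2$ with $w_1,w_2\in\B$, and conversely. Hence ``$w_1(t)-w_2(t)\to 0$ for all $w_1,w_2\in\B$'' is literally the statement ``$v(t)\to 0$ for all $v\in\dif(\B)$''. Since $0\in\dif(\B)$ is constant and $\dif(\B)\in\mathcal L^q$ by Lemma~\ref{lem: lemma about difference behaviors of affine systems}(i), this is exactly $0$-stability of $\dif(\B)$.

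For (ii)$\Rightarrow$(i), if $w_1,w_2\to\bar w$ then $w_1-w_2\to 0$, which is trivial.

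The substantive step is (iii)$\Rightarrow$(ii), namely producing a constant trajectory in $\B$.

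First, I would show that $\dif(\B)$ is autonomous with a square kernel representation $\dif(\B)=\ker R(\sigma)$, where $\det R\neq 0$. I would invoke standard linear behavioral theory for complete linear time-invariant behaviors. If $\dif(\B)$ had a free variable, one could choose that component to be a nonzero constant, contradicting $0$-stability. Therefore $\dif(\B)$ is autonomous, and it admits such an $R$. Moreover, $\dif(\B)$ contains no nonzero constant trajectory, again by $0$-stability. A constant $\bar v$ lies in $\ker R(\sigma)$ iff $R(1)\bar v=0$, so $R(1)$ is nonsingular.

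Next, I would fix $w_0\in\B$. Time-invariance gives $\sigma w_0-w_0\in\dif(\B)$, so $R(\sigma)\sigma w_0=R(\sigma)w_0$. Hence $c:=R(\sigma)w_0$ is a constant vector. I would then set $\bar w:=R(1)^{-1}c$, viewed as a constant trajectory. It satisfies $R(\sigma)\bar w=R(1)\bar w=c$, so $\bar w-w_0\in\ker R(\sigma)=\dif(\B)$. By Lemma~\ref{lem: lemma about difference behaviors of affine systems}(ii), $\bar w\in w_0+\dif(\B)=\B$.

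Finally, for any $w\in\B$ we have $w-\bar w\in\dif(\B)$, so $w(t)\to\bar w$, which is offset stability.

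I expect the main obstacle to be the first step, rigorously passing from $0$-stability of the complete linear behavior $\dif(\B)$ to autonomy and nonsingularity of $R(1)$. If quoting a kernel representation is undesirable, a fallback argument works directly in finite dimensions. Autonomy plus $L$-completeness gives a finite-dimensional state $x(t)=w_{[t,t+L]}$ evolving via a linear map $A$ on $\dif(\B)_{[0,L]}$, and $0$-stability means $A$ is Schur. Hence $I-A$ is invertible, and the affine state map of $\B$ then has a unique fixed point, which yields $\bar w$.
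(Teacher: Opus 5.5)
Your proof is correct and follows essentially the same route as the paper: (i)$\Leftrightarrow$(iii) and (ii)$\Rightarrow$(i) are immediate, and the constant trajectory is $\bar w = R(1)^{-1}c$, taken from an offset kernel representation $\B=\ker_c R(\sigma)$ with $R$ square and $R(1)$ nonsingular. The only difference is in how that representation is obtained. The paper cites Corollary~\ref{cor: equivalent conditions for contractivity} and uses that $R$ is Schur, so $\det R(1)\neq 0$. You build the representation directly: time-invariance shows $R(\sigma)w_0$ is constant, and invertibility of $R(1)$ follows because $\dif(\B)$ has no nonzero constant trajectories. This is more self-contained and avoids routing through a corollary whose own proof invokes this lemma.
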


The proof of Lemma~\ref{lem: contractive iff difference behavior is stable} hinges on proving that (i) implies (ii), that is, showing that for a contractive system all trajectories converge to a certain limit, and moreover that this limit must be a constant trajectory. The full details of the proof, as well as all other proofs for results in this paper, can be found in the Appendix. 

Lemma~\ref{lem: contractive iff difference behavior is stable} provides a number of avenues towards a test for these properties. Intuitively, testing for contractivity requires us to perform a test for all \textit{pairs of trajectories} of $\B$, whereas testing for offset stability requires us to find a constant trajectory and then perform a test for each trajectory. Lastly, item (iii) in Lemma~\ref{lem: contractive iff difference behavior is stable}  allows us to convert results about the $0$-stability of linear behaviors into results about the contractivity of affine behaviors through the difference behavior. Often, when models are derived directly from data, determining the difference behavior can be numerically unstable (cf. \cite{Markovsky2025}). However, given an offset kernel representation or a state-space realization, this yields:

\begin{corollary}[Consequences for representations]\label{cor: equivalent conditions for contractivity}
	Let $\B\in \A^q$. Then the following are equivalent:
	\begin{enumerate}[(i)]
		\item $\B$ is contractive;
		\item $\B$ admits an offset kernel representation $\B = \ker_c R(\sigma)$ where $c\in \R^q$ and $R\in \R^{q\times q}[\xi]$ is Schur, that is, the roots of $\det R$ lie in the unit disk;
		\item $\B$ admits a state-space representation\footnote{That is, $\B$ is the collection of all $y\in (\R^q)^\Z$ such that there is an $x\in (\R^n)^\Z$ such that
			$ x(t+1) = Ax(t) + E$ and $y(t) = Cx(t) + F$ for all $t\in \Z$, cf. \cite{Padoan2025}.} with matrices $(A,C, E, F)$ where $A\in \R^{n\times n}$ is Schur, that is, the eigenvalues of $A$ lie in the unit disk.
	\end{enumerate}
\end{corollary}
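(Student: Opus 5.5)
The plan is to route everything through Lemma~\ref{lem: contractive iff difference behavior is stable}: contractivity of $\B$ is equivalent to $0$-stability of the linear behavior $\dif(\B)$. Both representation results then reduce to classical facts for linear autonomous behaviors, once we check how each representation of $\B$ relates to one of $\dif(\B)$. Throughout, I treat $\B$ as autonomous, which is implicit in (ii) and (iii): a square $R$ with $\det R\neq 0$, and an output-only state-space model.

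\textbf{(i)$\Leftrightarrow$(ii).} First I show that if $\B=\ker_c R(\sigma)$ with $R$ square, then $\dif(\B)=\ker R(\sigma)$.
\begin{itemize}
\item For $\subseteq$: if $w_1,w_2\in\B$, then $R(\sigma)(w_1-w_2)=c-c=0$.
\item For $\supseteq$: fix $w_0\in\B$ (nonempty by standing assumption). Any $v\in\ker R(\sigma)$ gives $w_0+v\in\B$, so $v\in\dif(\B)$. This also uses Lemma~\ref{lem: lemma about difference behaviors of affine systems}(ii).
\end{itemize}

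Now assume (i). Then $\dif(\B)$ is an autonomous, $0$-stable linear behavior. By the standard linear theory (Willems/Polderman), it admits a kernel representation $\ker R(\sigma)$ with $R$ square, and $0$-stability is equivalent to $\det R$ being Schur. Take any $w_0\in\B$ and set $c:=R(\sigma)w_0$. The point to verify is that $c$ is constant. This follows from time invariance: $\sigma w_0\in\B$, so $\sigma w_0-w_0\in\dif(\B)$. Hence $R(\sigma)\sigma w_0=R(\sigma)w_0$, i.e.\ $\sigma c=c$. Then Lemma~\ref{lem: lemma about difference behaviors of affine systems}(ii) gives $\B=w_0+\ker R(\sigma)=\ker_c R(\sigma)$.

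Conversely, given (ii), the identity above shows $\dif(\B)=\ker R(\sigma)$ with $R$ Schur. This behavior is $0$-stable by the linear theory, so Lemma~\ref{lem: contractive iff difference behavior is stable} yields (i).

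\textbf{(i)$\Leftrightarrow$(iii).} Given a realization $(A,C,E,F)$ of $\B$, subtracting two trajectories shows that $\dif(\B)$ contains the outputs of the linear system $x(t+1)=Ax(t)$, $y=Cx$.
\begin{itemize}
\item For the reverse inclusion, take a linear trajectory $(x_\ell,y_\ell)$ and add it to a fixed affine trajectory $(x_0,y_0)$, with $y_0\in\B$. This gives an affine trajectory, so $y_\ell\in\dif(\B)$. Thus $\dif(\B)$ is exactly the output behavior of $(A,C)$.
\item If $A$ is Schur, every output of $(A,C)$ tends to zero. So $\dif(\B)$ is $0$-stable, and (i) follows.
\item Conversely, assume (i). Take a minimal (observable) realization $(A,C)$ of the autonomous $0$-stable linear behavior $\dif(\B)$. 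By observability, every mode of $A$ appears in some output, and each output decays, so $A$ is Schur.
\end{itemize}

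To build an affine realization of $\B$ from this, pick $w_0\in\B$. The trick I would use is to exploit the Schur property: $I-A$ is invertible, and by (ii) of Lemma~\ref{lem: contractive iff difference behavior is stable}, $\B$ contains a constant trajectory $\bar w$. Set $E:=0$ and $F:=\bar w$. Then $\B=\bar w+\dif(\B)$ is realized by $(A,C,0,\bar w)$, again using Lemma~\ref{lem: lemma about difference behaviors of affine systems}(ii).

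The main obstacle is citing cleanly the linear facts: that an autonomous linear behavior has a square kernel representation, and that its $0$-stability is equivalent to Schurness of $\det R$, or of $A$ in an observable realization. If I need care, I would derive the state-space version from the kernel version via an observable realization, using that the characteristic polynomial of $A$ then equals $\det R$ up to a constant.
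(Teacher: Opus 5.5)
Your proposal is correct and follows the paper's route. You reduce contractivity to $0$-stability of $\dif(\B)$, apply the linear facts (square Schur kernel representation, Schur $A$ in an observable realization), and transfer representations between $\B$ and $\dif(\B)$; the only difference is that you prove these transfers by hand (constancy of $c=R(\sigma)w_0$ via time invariance, and the realization $(A,C,0,\bar w)$) where the paper cites \cite[Theorems 4 and 6]{Padoan2025}. Minor points: your first inclusion in (iii) is worded backwards (subtraction gives $\dif(\B)\subseteq$ outputs of $(A,C)$), autonomy of $\dif(\B)$ should be derived from $0$-stability rather than assumed, and since the paper proves Lemma~\ref{lem: contractive iff difference behavior is stable}(ii) from this very corollary, it is cleaner to take $\bar w := R(1)^{-1}c$ directly from your Schur kernel representation.
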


Note that every \textit{minimal} offset kernel or state-space representation of a contractive system is guaranteed to have the property described in (ii) and (iii) respectively.

\begin{example}[ctd.]
	Consider the system from Example~\ref{ex:example}. Given that $\B$ is already in the form of an offset kernel representation, we can prove that the system is contractive using Corollary~\ref{cor: equivalent conditions for contractivity}. Indeed, the $1\times 1$ polynomial matrix $R(\xi) =10\xi^2 + 18\xi+9$ is Schur and hence the system is contractive. By Lemma~\ref{lem: contractive iff difference behavior is stable}, it is therefore also offset stable. Moreover, the proof of Lemma~\ref{lem: contractive iff difference behavior is stable} elucidates how to find the unique constant trajectory contained in $\B$. Indeed, we see that $\bar w := \frac{20}{37}\in \B$, and that $\B$ is $\bar w$-stable. \hfill $\Box$
\end{example}

\subsection{Quadratic difference forms}
It is well-known that a linear system in state-space form is asymptotically stable if and only if it admits a quadratic Lyapunov function. In this subsection, we review the analogue of this result for linear behaviors, and then show how it generalizes to affine behaviors. To do so, we need the notion of a \textit{quadratic difference form (QDF)}. For convenience, we denote by $\mathbb{S}^k$ the set of real symmetric matrices of size $k$.

Let $\Phi\in \mathbb{S}^{q(L+1)}$ for some $L\in \Z_+$. The QDF (induced by $\Phi$) is defined as
\[
Q_{\Phi}(w)(t) := w_{[t, t+L]}^\top \Phi w_{[t, t+L]},
\]
where $w\in (\R^q)^\Z$. Note that the QDF $Q_\Phi$ is actually not uniquely determined by $\Phi$. By adding zero blocks to $\Phi$, we can obtain a larger matrix which leads to the same QDF. We thus define the \textit{degree} of $Q_{\Phi}$ as the smallest $\ell\in \Z_+$ such that there exists a $\bar\Phi\in \mathbb{S}^{q(\ell+1)}$ for which $Q_{\Phi} = Q_{\bar\Phi}$. This means that a QDF of degree $L$ can be represented by a matrix in $\mathbb{S}^{q(L+1)}$, and that if $\Phi\in \mathbb{S}^{q(L+1)}$ is given, then $Q_{\Phi}$ has degree at most $L$. Finally, we define the \textit{increment} of $Q_\Phi$ by
\[
\nabla Q_{\Phi}(w)(t) := Q_{\Phi}(w)(t+1) - Q_{\Phi}(w)(t),
\]
where, as usual, $w\in (\R^q)^\Z$. Note that $\nabla Q_{\Phi}$ is also a QDF: indeed, by taking
\[
\nabla \Phi := \begin{bmatrix}0 \\ I_{q(L+1)}\end{bmatrix} \Phi\begin{bmatrix}0 \\ I_{q(L+1)}\end{bmatrix}^\top -\begin{bmatrix} I_{q(L+1)} \\ 0\end{bmatrix} \Phi\begin{bmatrix} I_{q(L+1)} \\ 0\end{bmatrix}^\top
\]
we have $\nabla Q_{\Phi} = Q_{\nabla \Phi}$. From this we see that if $Q_{\Phi}$ has degree $L$, then $\nabla Q_{\Phi}$ has degree $L+1$.

The following result characterizes $0$-stability of a behavior $\B\in \mathcal{L}^q$ in terms of the existence of a QDF satisfying certain (non)negativity conditions.
\begin{theorem}[{\cite[Theorem 1]{Kojima2006}}]\label{thm: linear is stable iff there exist Lyapunov function}
	For $\B\in \mathcal{L}^q$, the following are equivalent:
	\begin{enumerate}[(i)]
		\item $\B$ is $0$-stable;
		\item There exists $L\in \Z_+$ and $\Phi\in \mathbb{S}^{q(L+1)}$ such that for all $w\in \B$ and $t\in \Z$,
		\[
		Q_{\Phi}(w)(t)\geq 0, \qquad \nabla Q_{\Phi}(w)(t)\leq 0,
		\]
		and $\nabla Q_{\Phi}(w) = 0$ if and only if $w = 0$.
	\end{enumerate} 
\end{theorem}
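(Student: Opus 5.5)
We prove the two implications separately, passing through a state representation of the linear behavior $\B$. Since $\B\in\mathcal{L}^q$ is time-invariant and $L$-complete, it admits an observable state-space realization. In the autonomous case this reads $x(t+1)=Ax(t)$, $w(t)=Cx(t)$, with $(C,A)$ observable. The state is then a linear function of a finite window of the trajectory: there is a matrix $X$ with
\[
x(t)=X\,w_{[t,t+n-1]}
\]
for all $w\in\B$, where $n$ is the state dimension. A quadratic form in the state therefore induces a QDF on $\B$.

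\emph{(i) $\Rightarrow$ (ii).} If $\B$ is $0$-stable, every trajectory decays, so $A$ restricted to the reachable part of the state space is Schur. We reduce to a minimal realization so that $A$ itself is Schur. The Lyapunov equation $A^\top P A - P = -C^\top C - \epsilon I$ (or simply $-I$) then has a unique solution $P\succ 0$. Set $\Phi := X^\top P X$. Then $Q_\Phi(w)(t)=x(t)^\top P x(t)\ge 0$ and
\[
\nabla Q_\Phi(w)(t)=-x(t)^\top x(t)\le 0 .
\]
Moreover $\nabla Q_\Phi(w)\equiv 0$ forces $x\equiv 0$, hence $w\equiv 0$.

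The subtlety is that these (in)equalities need only hold on $\B$, while the QDF is defined on all of $(\R^q)^\Z$. This causes no trouble, because $Q_\Phi$ and $\nabla Q_\Phi$ are only evaluated on $w\in\B$, where the identity $x=Xw_{[t,t+n-1]}$ holds.

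\emph{(ii) $\Rightarrow$ (i).} Take $w\in\B$. The sequence $Q_\Phi(w)(t)$ is nonincreasing and bounded below by $0$, so it converges, and therefore $\nabla Q_\Phi(w)(t)\to 0$. By finite-dimensionality of $\B_{[0,L+1]}$, the quadratic form $v\mapsto v^\top\nabla\Phi v$ is negative semidefinite on the subspace $\B_{[0,L+1]}$. We use completeness together with the hypothesis that $\nabla Q_\Phi(w)=0$ only for $w=0$ to upgrade this to definiteness on the state.

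\emph{Main obstacle.} The hypothesis only says $\nabla Q_\Phi(w)$ vanishes \emph{identically} only if $w=0$, not pointwise. To handle this we work in the state coordinates $x$. There, $\nabla Q_\Phi(w)(t)=-x(t)^\top N x(t)$ with $N\succeq 0$, and $Q_\Phi(w)(t)=x(t)^\top M x(t)$ with $M\succeq 0$ and $A^\top M A - M = -N$. The hypothesis says that no nonzero trajectory of $x(t+1)=Ax(t)$ stays in $\ker N$, i.e.\ $(N^{1/2},A)$ is observable. The standard Lyapunov argument with observability (LaSalle) then gives $x(t)\to 0$, hence $w(t)=Cx(t)\to 0$.

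Concretely, suppose $A$ had an eigenvalue $\lambda$ with $|\lambda|\ge 1$ and eigenvector $v$. Observability of $(N^{1/2},A)$ gives $N v\ne 0$. The identity
\[
(|\lambda|^2-1)\,v^*Mv=-v^*Nv<0
\]
then contradicts $M\succeq 0$. Hence $A$ is Schur.

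In summary, the main work is in justifying the passage between QDFs on $\B$ and quadratic forms in the state. This needs both a choice of minimal realization and the fact that the state is a linear function of finitely many future samples, so that $\Phi$ and $\nabla\Phi$ correspond to $M$ and $N$ with $M$ possibly replaced by $X^\top M X$.
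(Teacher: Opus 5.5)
The paper gives no proof of this theorem; it imports it from \cite{Kojima2006}. Your argument therefore has to stand on its own, and for \emph{autonomous} $\B$ it does. The choice $\Phi:=X^\top PX$ with $A^\top PA-P=-I$ gives (i)$\Rightarrow$(ii). In the converse, the identity $(|\lambda|^2-1)v^*Mv=-v^*Nv<0$ correctly excludes eigenvalues with $|\lambda|\ge 1$.

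The gap is that the statement concerns an arbitrary $\B\in\mathcal{L}^q$. You pass to the autonomous case (``In the autonomous case this reads\dots'') without showing that either condition forces autonomy. In (i)$\Rightarrow$(ii) this is harmless: a $0$-stable behavior is autonomous, since otherwise $\bm m(\B)>0$ and a free component could carry the constant signal $1$. In (ii)$\Rightarrow$(i) it is a real missing step. If $\B$ had free variables, the realization would have inputs. Then $Q_\Phi(w)(t)$ would be a form in $(x(t),u_{[t,t+L]})$ rather than $x(t)^\top Mx(t)$, and your eigenvalue argument would not apply.

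The fix uses only (ii). Suppose $\B$ is not autonomous. By linearity there is a nonzero $w\in\B$ with $w(t)=0$ for $t\le 0$. Then $Q_\Phi(w)(t)=0$ for $t\le -L$. Since $t\mapsto Q_\Phi(w)(t)$ is nonincreasing and nonnegative, it vanishes identically. Hence $\nabla Q_\Phi(w)=0$, so $w=0$, a contradiction.

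Once autonomy is established, also fix the realization explicitly. Take $n=\dim\B$, let $A$ be the matrix of the invertible map $\sigma|_{\B}$, and let $C$ be the matrix of $w\mapsto w(0)$. Then every $x_0\in\R^n$ generates a trajectory of $\B$. This is exactly what you need for two things:
\begin{itemize}
\item $M,N\succeq 0$ on all of $\R^n$;
\item the hypothesis on $\nabla Q_\Phi$ being equivalent to observability of $(N^{1/2},A)$.
\end{itemize}
In (i)$\Rightarrow$(ii), this realization also replaces the imprecise phrase ``reachable part''. Observability gives $x(t)=Xw_{[t,t+n-1]}\to 0$ for every $x_0$, so $A$ is Schur.
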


Given the clear links to Lyapunov functions for linear systems in state-space representation, we refer to a QDF $Q_{\Phi}$ which satisfies the conditions in (ii) as a \textit{Lyapunov function (for $\B$)}. In order to make it possible to test for this condition, we note that for $L$-complete systems, the degree of a Lyapunov function can be bounded by $L-1$ \cite[Proposition 4.9]{Willems1998}. This and Lemma~\ref{lem: contractive iff difference behavior is stable} lead to the following consequence for affine behaviors.

\begin{lemma}\label{cor: contractive iff there exists Lyapunov function for dif(B)}
	Let $\B\in \A^q$ be $L$-complete. Then the following are equivalent.
	\begin{enumerate}[(i)]
		\item  $\B$ is contractive;
		\item there exists a positive semidefinite $\Phi\in \mathbb{S}^{qL}$ such that for all $w_1,w_2\in\B$ and $t\in \Z$,
		\[\nabla Q_{\Phi}(w_1-w_2)(t)\leq 0 \quad \textrm{and}  \] 
		\[  \nabla Q_{\Phi}(w_1-w_2) = 0 \ \iff w_1=w_2;\]
		\item there exists $\Psi\in\mathbb{S}^{q(L+1)+1}$ such that $\Psi$ can be partitioned with $\Psi_{11} = \nabla \Phi$ for some positive semidefinite $\Phi\in \mathbb{S}^{qL}$ and
		\[    \begin{bmatrix} w_{[t, t+L]}\\ 1\end{bmatrix}^\top \Psi 
		\begin{bmatrix} w_{[t, t+L]}\\ 1\end{bmatrix} \leq 0,\]
		for all $w\in \B$ and $t\in\Z$, and there is precisely one trajectory $\bar{w}\in \B$ for which equality holds for all $t\in \Z$;
		\item there exists a positive semidefinite $\Phi\in \mathbb{S}^{qL}$ such that $Q_\Phi$ is a Lyapunov function for $\dif(\B)$.
	\end{enumerate} 
\end{lemma}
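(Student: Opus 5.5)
The plan is to prove the cycle (i)$\Rightarrow$(iv)$\Rightarrow$(ii)$\Rightarrow$(i), and then to attach (iii) to (ii) by a reformulation. Throughout, the main tools are Lemma~\ref{lem: lemma about difference behaviors of affine systems} and Lemma~\ref{lem: contractive iff difference behavior is stable}.

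\emph{(i)$\Rightarrow$(iv).} If $\B$ is contractive, Lemma~\ref{lem: contractive iff difference behavior is stable} says that $\dif(\B)\in\mathcal{L}^q$ is $0$-stable. Theorem~\ref{thm: linear is stable iff there exist Lyapunov function} then gives a Lyapunov function $Q_\Phi$ for $\dif(\B)$.

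Next we control its degree. Since $\B$ is $L$-complete, $\dif(\B)=\B-w_0$ is also $L$-complete: a window of $w-w_0$ lies in $\dif(\B)_{[t,t+L]}$ exactly when the corresponding window of $w$ lies in $\B_{[t,t+L]}$. By \cite[Proposition 4.9]{Willems1998}, $\Phi$ can therefore be taken of degree $L-1$, i.e.\ $\Phi\in\mathbb{S}^{qL}$.

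The remaining issue is to pass from $Q_\Phi(w)\geq0$ on $\dif(\B)$ to $\Phi\succeq0$ as a matrix. This is the step I expect to need care. The approach I would try first is to invoke the form of the construction in \cite{Willems1998,Kojima2006}, which produces $Q_\Phi$ as a sum of squares, i.e.\ $\Phi\succeq0$. If that is not directly available, I would modify $\Phi$ as follows. The windows $\dif(\B)_{[t,t+L-1]}$ form a subspace $V$. Decompose $\Phi$ with respect to $V\oplus V^\perp$ and replace it by its $V$-compression $P_V\Phi P_V$. This leaves $Q_\Phi$ unchanged on $\dif(\B)$, and the compression is positive semidefinite on $V$ and zero on $V^\perp$, hence $\succeq0$.

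\emph{(iv)$\Rightarrow$(ii).} This is immediate, since $w_1-w_2\in\dif(\B)$ for $w_1,w_2\in\B$, and $w_1-w_2=0$ if and only if $w_1=w_2$.

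\emph{(ii)$\Rightarrow$(i).} By Lemma~\ref{lem: lemma about difference behaviors of affine systems}, every $v\in\dif(\B)$ has the form $w_1-w_2$. Hence (ii) says that $Q_\Phi$ is a Lyapunov function for $\dif(\B)$: the nonnegativity condition holds by $\Phi\succeq0$. Theorem~\ref{thm: linear is stable iff there exist Lyapunov function} gives that $\dif(\B)$ is $0$-stable, and Lemma~\ref{lem: contractive iff difference behavior is stable} then gives (i).

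\emph{(i)$\Leftrightarrow$(iii).} Assume (i). By Lemma~\ref{lem: contractive iff difference behavior is stable} there is a constant $\bar w\in\B$ with $\bar w(t)=\bar c$, and $\B=\bar w+\dif(\B)$. Define $\Psi$ so that
\[
\begin{bmatrix} w_{[t,t+L]}\\ 1\end{bmatrix}^\top\Psi\begin{bmatrix} w_{[t,t+L]}\\ 1\end{bmatrix}=\nabla Q_\Phi(w-\bar w)(t),
\]
i.e.\ expand the quadratic form in $w_{[t,t+L]}-\mathbf 1\otimes\bar c$. Then $\Psi_{11}=\nabla\Phi$, and the inequality and the unique-equality trajectory $\bar w$ follow from (iv).

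Conversely, assume (iii). Fix the unique $\bar w$ and write $\Psi$ in block form with $\Psi_{11}=\nabla\Phi$, $\Psi_{12}=\psi$, $\Psi_{22}=\gamma$. For $v\in\dif(\B)$ and $\lambda\in\R$ we have $\bar w+\lambda v\in\B$. The form evaluated at $\bar w+\lambda v$ is a quadratic polynomial in $\lambda$ that is $\leq0$ for all $\lambda$, with leading coefficient $\nabla Q_\Phi(v)(t)$. Hence $\nabla Q_\Phi(v)\leq0$. If $\nabla Q_\Phi(v)(t)=0$ for all $t$, then the polynomial is linear in $\lambda$ and bounded above, so its linear term vanishes, and the form vanishes along $\bar w+\lambda v$ for every $t$. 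Uniqueness of $\bar w$ then forces $v=0$. So $Q_\Phi$ is a Lyapunov function for $\dif(\B)$, which is (iv).

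A subtle point: equality at $\lambda=0$ for all $t$ is used via the uniqueness hypothesis, which applies to $\bar w$ itself. One must also check the converse direction of the iff in (ii) and (iv): if $v=0$ then trivially $\nabla Q_\Phi(v)=0$.
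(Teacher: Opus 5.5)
Your proposal is correct and follows essentially the paper's route: (i)$\Leftrightarrow$(iv) via Lemma~\ref{lem: contractive iff difference behavior is stable} and Theorem~\ref{thm: linear is stable iff there exist Lyapunov function}, (ii)$\Leftrightarrow$(iv) by definition, (i)$\Rightarrow$(iii) by expanding $\nabla Q_\Phi(w-\bar w)$ about the constant trajectory, and (iii)$\Rightarrow$(iv) by testing on $\bar w+\lambda v$ (the paper uses only $\lambda=\pm1$ and adds the two resulting inequalities). Your compression $P_V\Phi P_V$ and your check that $\dif(\B)$ inherits $L$-completeness make explicit two details the paper's proof leaves implicit: the degree bound and the positive semidefiniteness of $\Phi$ as a matrix.
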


Analogously to the definition of Lyapunov functions, we refer to $Q_\Phi$ as in Corollary~\ref{cor: contractive iff there exists Lyapunov function for dif(B)}.(ii) as a \textit{contraction form}.

\begin{remark}[Computational issues]
	If we have access to a kernel representation of $\dif(\B)$, we can test for the condition (iii) using \textit{linear matrix inequalities (LMIs)} \cite{Kojima2006_LMI}. In the absence of a representation for $\dif(\B)$ and/or knowledge of the resulting equilibrium, testing for either of the other conditions can prove more convenient. In general, all conditions in Corollary~\ref{cor: contractive iff there exists Lyapunov function for dif(B)} are finite-dimensional and convex, allowing efficient solutions.
\end{remark}

\begin{example}[ctd.]
	Consider the system from Example~\ref{ex:example}. To test for contractivity, a standard approach is to attempt to find a \textit{contraction metric}, i.e., a norm $\norm{\cdot}$ and scalar $0\leq \gamma <1$ such that for all trajectories $w_1,w_2\in\B$ and all $t\in \Z$,
	\[ \norm{w_1(t+1)-w_2(t+1)} \leq \gamma \norm{w_1(t)-w_2(t)}. \]
	Even though the system $\B$ was shown to be contractive, it can be shown that for \textit{any} norm and $0\leq \gamma < 1$, there is a trajectory of $\B$ violating the above condition. In contrast, Corollary~\ref{cor: contractive iff there exists Lyapunov function for dif(B)} shows that there must exist a \textit{contraction form}. Routine calculation reveals that 
	\[ \Phi = \begin{bmatrix} 9 & 9 \\ 9 & 10 \end{bmatrix}\]
	indeed induces a contraction form. \hfill $\Box$
\end{example}
This demonstrates a fundamental advantage of the QDF-based approach: it can certify contraction in cases where no norm-based metric exists.

\section{Implementability}\label{sec:implementability}
Having established tools for analyzing autonomous affine behaviors, we now turn to the design question: given a non-autonomous system with control variables, can we shape its behavior to achieve desirable properties such as contraction? To investigate this in further detail, we first need to formalize the notions of `interconnection' and `control' in the behavioral framework. Let $q,k\in \Z_+$ and $\B\in \A^{q+k}$. The trajectories in $\B$ can be thought of as pairs $(w,c)$ where $w\in (\R^q)^\Z$ and $c\in (\R^k)^\Z$. In this interpretation, $w$ represents the \textit{to-be-controlled variables}, whereas $c$ represents the \textit{control variables}. For instance, if $\B$ is given in state-space form, $w$ could denote the state variables, whereas $c$ contains its input and output. 

Consider a system $\mathcal{C}\subseteq (\R^k)^\Z$. To interconnect $\B$ and $\mathcal{C}$, we define
\[
\B \Join \mathcal{C} := \{(w,c)\in \B : c\in \mathcal{C}\},
\]
which represents the subbehavior of $\B$ where the control variables are constrained by $\mathcal{C}$. Hence, we refer to $\mathcal{C}$ as a \textit{controller}. The \textit{interconnection of $\B$ and $\mathcal{C}$} is then defined as the projection of this subbehavior onto the to-be-controlled variables, i.e.
\[
\B\lVert \mathcal{C} := \{w\in (\R^q)^\Z : (w,c)\in \B\Join \mathcal{C}\},
\]
or equivalently, $\B\lVert \mathcal{C} = \pi_w(\B \Join \mathcal{C})$, where $\pi_w$ is the projection onto $(\R^q)^\Z$. In the case where $\mathcal{C} = (\R^k)^\Z$, the controller imposes no restrictions on the trajectories of $\B$, and we see that $\B\lVert \mathcal{C} = \pi_w(\B)$. Finally, when $\mathcal{C} = \{c_0\}$, we denote $\B\lVert c_0 := \B\lVert\{c_0\}$. 

In some particular cases, all variables of a system are available for control (e.g. when stabilizing a system via state feedback). This corresponds to the case where the trajectories of $\B$ have no to-be-controlled variables or, equivalently, $q = 0$. To avoid minor technicalities and ensure that our definitions above are consistent with this context, we identify $\B\Join \mathcal{C} = \B\lVert \mathcal{C} = \B\cap\mathcal{C}$ and $\pi_w(\B) = \B$ in this case. We refer to this special case as \textit{full interconnection}, and refer to the case $q>0$ as \textit{partial interconnection}.

Next, we consider the notion of \textit{implementability}. Let $\M$ be a model class (i.e., a collection of behaviors) and let $\B\in \A^{q+k}$. We say that a reference behavior $\T\in \A^q$ is \textit{$\M$-implementable} if there exists a $\mathcal{C}\in \M$ such that $\B\lVert\mathcal{C}=\T$. In this case, we say that \textit{$\mathcal{C}$ implements $\T$}. If useful for further clarification, we say that $\T$ is \textit{$(\M, \B)$-implementable}. Our next result is a direct generalization of \cite[Proposition 1]{Belur2001} to the affine case, and provides necessary and sufficient conditions for $\A^k$-implementability of an affine behavior.

\begin{theorem}[Affine implementability conditions]\label{thm: affine implementability theorem}
	Let $\B\in \A^{q+k}$ and $\T\in \A^q$. Fix any $w_0\in \T$. Then the following are equivalent:
	\begin{enumerate}[(i)]
		\item $\T$ is $\A^k$-implementable;
		\item $\T\subseteq \pi_w(\B)$ and for some $c_0\in (\R^k)^\Z$ such that $(w_0, c_0)\in \B$, we have $\B\lVert c_0\subseteq \T$. 
		\item $w_0+\dif(\B)\lVert 0 \subseteq \T \subseteq \pi_w(\B)$.
	\end{enumerate}
\end{theorem}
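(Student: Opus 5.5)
The core of the argument is an identity relating the single-trajectory interconnection $\B\lVert c_0$ to the difference behavior. Let $(w_0,c_0)\in\B$. By Lemma~\ref{lem: lemma about difference behaviors of affine systems}(ii), $\B=(w_0,c_0)+\dif(\B)$. Hence $(w,c_0)\in\B$ if and only if $(w-w_0,0)\in\dif(\B)$. This gives
\[
\B\lVert c_0 = w_0+\dif(\B)\lVert 0 .
\]
The right-hand side does not depend on which $c_0$ is chosen. This gives (ii)$\Leftrightarrow$(iii) directly. The condition $\T\subseteq\pi_w(\B)$ is common to both. Moreover, $w_0\in\T\subseteq\pi_w(\B)$ guarantees that some $c_0$ with $(w_0,c_0)\in\B$ exists. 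I will also record a second consequence. By Lemma~\ref{lem: lemma about difference behaviors of affine systems}(ii) applied to $\T$, we have $\T-w_0=\dif(\T)$. So (iii) implies $\dif(\B)\lVert 0\subseteq\dif(\T)$, and therefore $w'+\dif(\B)\lVert 0\subseteq \T$ for every $w'\in\T$.

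For (i)$\Rightarrow$(ii), suppose $\mathcal{C}\in\A^k$ implements $\T$. Then $\T=\B\lVert\mathcal{C}\subseteq\pi_w(\B)$. Since $w_0\in\T$, there is some $c_0\in\mathcal{C}$ with $(w_0,c_0)\in\B$. Because $\{c_0\}\subseteq\mathcal{C}$, we get $\B\lVert c_0\subseteq\B\lVert\mathcal{C}=\T$.

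For (iii)$\Rightarrow$(i), I would use the canonical controller
\[
\mathcal{C}:=\{c\in(\R^k)^\Z : (w,c)\in\B \text{ for some } w\in\T\},
\]
which is the projection onto the $c$-variables of $\B\cap(\T\times(\R^k)^\Z)$. First, $\T\subseteq\B\lVert\mathcal{C}$. Indeed, every $w\in\T$ lies in $\pi_w(\B)$, so some $(w,c)\in\B$ exists, and then $c\in\mathcal{C}$ by definition. Conversely, let $w\in\B\lVert\mathcal{C}$. Then $(w,c)\in\B$ with $c\in\mathcal{C}$, so there is $w'\in\T$ with $(w',c)\in\B$. Subtracting gives $(w-w',0)\in\dif(\B)$. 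Hence $w\in w'+\dif(\B)\lVert 0\subseteq\T$ by the consequence recorded above. Therefore $\B\lVert\mathcal{C}=\T$.

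It remains to check that $\mathcal{C}\in\A^k$. Affineness and time-invariance are immediate. The set $\B\cap(\T\times(\R^k)^\Z)$ is affine and shift-invariant, and projections preserve both properties. It is nonempty because it contains $(w_0,c_0)$.

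I expect completeness of $\mathcal{C}$ to be the main obstacle. My plan is to reduce it to the linear case. Translating by $(w_0,c_0)$, the intersection becomes $(w_0,c_0)+\bigl(\dif(\B)\cap(\dif(\T)\times(\R^k)^\Z)\bigr)$. This is an intersection of complete linear behaviors, which is complete with completeness index the maximum of the two. Hence $\mathcal{C}=c_0+\pi_c\bigl(\dif(\B)\cap(\dif(\T)\times(\R^k)^\Z)\bigr)$. The projection of a complete linear time-invariant behavior is again complete, by the elimination theorem for linear behaviors. Finally, translating a complete linear behavior by the fixed trajectory $c_0\in\mathcal{C}$ preserves completeness, because $c-c_0$ lies in the linear part exactly when its windows do. This yields $\mathcal{C}\in\A^k$ and completes the cycle of implications.
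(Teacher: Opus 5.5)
Your proof is correct. For (ii)$\Leftrightarrow$(iii) it is essentially the paper's argument, packaged as the identity $\B\lVert c_0 = w_0+\dif(\B)\lVert 0$. That identity also gives at once the remark after the theorem, that (ii) holds for every admissible $c_0$ once it holds for one. Your (i)$\Rightarrow$(ii) via $\B\lVert c_0\subseteq \B\lVert\mathcal{C}$ is a trivial variant of the paper's (i)$\Rightarrow$(iii).

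The genuine difference is in (iii)$\Rightarrow$(i). The paper fixes an offset kernel representation $\B=\ker_\eta [R_1(\sigma)\ \ R_2(\sigma)]$ and notes that $\dif(\B)\lVert 0=\ker R_1(\sigma)\subseteq\dif(\T)$. It then invokes a polynomial factorization lemma to write $\dif(\T)=\ker (XR_1)(\sigma)$, hence $\T=\ker_\zeta (XR_1)(\sigma)$. Finally it takes the explicit controller $\ker_{X(1)\eta-\zeta}(XR_2)(\sigma)$, which belongs to $\A^k$ automatically.

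You instead take the canonical controller $\pi_c\bigl(\B\cap(\T\times(\R^k)^\Z)\bigr)$. With this choice, implementation is a short set-theoretic check. The only nontrivial point is completeness. You correctly reduce it, by translating by $(w_0,c_0)$, to two facts about complete linear behaviors: intersections are complete, which you verify directly, and projections are complete, which is the elimination theorem on $\Z$. The paper uses the same elimination fact elsewhere, e.g.\ to obtain $\pi_w(\dif(\B))=\ker R_{21}(\sigma)$.

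What each route buys: yours stays representation-free, in line with the paper's stated aims, and produces the largest controller contained in $\pi_c(\B)$ that implements $\T$. The paper's route instead delivers a concrete offset kernel representation of a controller, computable directly from polynomial data for $\B$ and $\T$.
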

In fact, it can be shown that if condition (ii) holds for some $c_0\in (\R^k)^\Z$, it holds for any $c_0\in (\R^k)^\Z$ such that $(w_0, c_0)\in \B$.

\begin{remark}
	Using Theorem~\ref{thm: affine implementability theorem} to test for implementability involves checking twice whether all trajectories of an affine system are also trajectories of another affine system. For kernel representations this can be resolved using polynomial algebra. In the case where systems are given in a data-based representation, we can use linear algebra (cf. \cite[Corollary 1]{Padoan2023} for the linear case).
\end{remark}

\section{Existence of offset stable references}\label{sec:existence}
Often, we are not interested in implementing a specific reference behavior, but rather in achieving a more general control objective. For instance, given $\B\in \mathcal{L}^{q+k}$, we may want to find a controller $\mathcal{C}\in \mathcal{L}^k$ such that the resulting interconnection $\B\lVert \mathcal{C}$ is $0$-stable. In the language of implementability, this means we want to find a $0$-stable, $(\mathcal{L}^k, \B)$-implementable reference $\T\in \mathcal{L}^q$. In the full interconnection case, this particular problem is trivial. 

\begin{example}\label{exam: stabilization of linear behavior by non-regular interconnection}
	We consider a linear behavior in state-space form to illustrate this issue. Let $A\in \R^{n\times n}, B\in \R^{m\times n}$, and consider the behavior $\B\in \mathcal{L}^{n+m}$ consisting of all state-input pairs $(x,u)$ satisfying
	\[
	x(t+1) = Ax(t) + Bu(t)
	\]
	for all $t\in \Z$. In the classical problem of stabilization by state feedback, we have access to both the state and the input as control variables. Note that $\T = \{0\}$ is then a $0$-stable, $(\mathcal{L}^{n+m}, \B)$-implementable reference. Indeed, we can simply take $\mathcal{C} = \{0\}$ as the controller. \hfill $\Box$
\end{example}

This may seem rather strange at first glance, as it gives the impression that \textit{every} system $\B$ of the form given in Example~\ref{exam: stabilization of linear behavior by non-regular interconnection} is `stabilizable' in some sense. This is however not the case. To see why, we introduce the notion of \textit{regular} implementability.

\subsection{Regular implementability}

We first define what we mean by a \textit{regular} interconnection. The intuitive idea is that an interconnection between a system and controller is \textit{regular} if the controller does not over-constrain the to-be-controlled system by providing it with too many input signals. To make this more formal, we need to discuss the notion of inputs and outputs from a behavioral point of view.

Let $\B\in \A^q$ and let $m\in [0,q]$. Let $p := q-m$, and decompose $\R^q$ as $\R^m\times \R^p$. Let $\pi_u : \R^m\times \R^p\to\R^m$ be the projection map. We say that $(\R^m)^\Z$ is \textit{free in $\B$} if there exists a permutation matrix $\Pi\in \R^{q\times q}$ such that $\pi_u(\Pi \B) = (\R^m)^\Z$.\footnote{Here, we interpret $\Pi$ and $\pi_u$ as maps $(\R^q)^\Z\to(\R^q)^\Z$ and $(\R^m\times\R^p)^\Z\to(\R^m)^\Z$ respectively.} We denote
\begin{align*}
	\bm m(\B) &:= \max\{m\in [0,q] : \text{$(\R^m)^\Z$ is free in $\B$}\}, \\ 
	\bm p(\B) &:= q - \bm m(\B),
\end{align*}
and refer to $\bm m(\B)$ and $\bm p(\B)$ as the \textit{input} and \textit{output cardinality of $\B$} respectively. An important fact is that $\bm m(\B) = \bm m (\dif(\B))$ and $\bm p(\B) = \bm p(\dif(\B))$, see \cite[p. 13]{Padoan2025}. 

Now, given $\B\in \A^{q+k}$ and $\mathcal{C}\subseteq (\R^k)^\Z$, we say that the interconnection of $\B$ and $\mathcal{C}$ is \textit{regular} if $\B\Join \mathcal{C}$ is nonempty and
\[
\bm m(\B \Join \mathcal{C}) = \bm m(\B) - \bm p(\mathcal{C}),
\]
or put less formally, the number of inputs of the interconnected system has to equal the original number of inputs minus the number of outputs supplied by the controller. The terminology for regular implementability is analogous to that of implementability, interjecting the word `regular(ly)' where needed.

Notice that in Example~\ref{exam: stabilization of linear behavior by non-regular interconnection} we have $\bm p(\mathcal{C}) = n+m$, whereas $\bm m(\B) = m$. In less mathematical terms, the controller $\mathcal{C}$  supplies more outputs than the to-be-controlled system has inputs. To make the stabilization problem well-posed, we therefore restrict our attention to \textit{regularly} implementable references.

\begin{remark}
	Given $\B\in \A^{q+k}$ and $\mathcal{C}\in \A^k$, one can show that the interconnection of $\B$ and $\mathcal{C}$ is regular if and only if the interconnection of $\dif(\B)$ and $\dif(\mathcal{C})$ is regular. This allows for the use of the linear conditions (cf. \cite{Belur2002}) in testing the regularity of an affine interconnection.
\end{remark}

\subsection{Sufficiency of linear and affine controllers}
Another well-known result about linear systems in state-space form is that a system can be stabilized by linear dynamic output feedback if and only if it is detectable and stabilizable. There is an analogous result in the linear case for behaviors (cf. \cite[Theorem 6]{Belur2002}), which we now generalize to the affine setting, resolving problem 3). To do so, we first need a few definitions.

Let $\B\in \A^q$. Given a constant trajectory $\bar w\in (\R^q)^\Z$, we say that $\B$ is \textit{$\bar w$-stabilizable} if for every $w\in \B$, there is a $\tilde w\in \B$ such that $w(t) = \tilde w(t)$ for $t\leq 0$ and $\lim_{t\to\infty}\tilde w(t) = \bar w$. We say $\B$ is \textit{offset stabilizable} if there exists a constant trajectory $\bar w\in \B$ such that $\B$ is $\bar w$-stabilizable.

Similarly to Lemma~\ref{lem: contractive iff difference behavior is stable}, we can now  make an observation:
\begin{lemma}\label{lem: offset stabilizable iff dif. behavior stabilizable}
	Let $\B\in\A^q$. Then $\B$ is offset stabilizable if and only if $\dif(\B)$ is $0$-stabilizable. 
\end{lemma}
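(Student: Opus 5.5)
The plan is to prove both implications directly from the decomposition in Lemma~\ref{lem: lemma about difference behaviors of affine systems}, $\B = w_0 + \dif(\B)$ for any $w_0\in\B$. The choice of base point is what makes each direction work. For the forward direction we take $w_0 = \bar w$, the constant trajectory witnessing offset stabilizability. For the converse we need to produce a constant trajectory in $\B$, and that is the main obstacle.

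\emph{($\Rightarrow$).} Suppose $\B$ is $\bar w$-stabilizable for a constant $\bar w\in\B$, and let $v\in\dif(\B)$. By Lemma~\ref{lem: lemma about difference behaviors of affine systems}(ii), $w := \bar w + v\in\B$. Stabilizability gives $\tilde w\in\B$ with $\tilde w(t) = w(t)$ for $t\leq 0$ and $\tilde w(t)\to\bar w$. Set $\tilde v := \tilde w - \bar w\in\B-\B = \dif(\B)$. Then $\tilde v(t) = v(t)$ for $t\leq 0$ and $\tilde v(t)\to 0$, so $\dif(\B)$ is $0$-stabilizable.

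\emph{($\Leftarrow$), reduction.} Suppose $\dif(\B)$ is $0$-stabilizable. Once we have a constant $\bar w\in\B$, the argument above runs in reverse. Given $w\in\B$, the difference $w-\bar w$ lies in $\dif(\B)$. Stabilizing it gives $\tilde v\in\dif(\B)$ that agrees with $w-\bar w$ on $t\leq 0$ and tends to $0$. Then $\tilde w := \bar w + \tilde v\in\B$ agrees with $w$ on $t\leq 0$ and tends to $\bar w$.

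\emph{($\Leftarrow$), existence of a constant trajectory.} This is the key step. It is where I expect the real work, mirroring the (i)$\Rightarrow$(ii) step of Lemma~\ref{lem: contractive iff difference behavior is stable}. The plan has three parts.
\begin{enumerate}
\item Pick any $w_0\in\B$ and use stabilizability to produce an element of $\B$ that decays to a constant.
\item Use a finite-dimensional state-space (or offset kernel) description to force a genuine constant trajectory.
\item If the first attempt fails, exploit linearity and shift invariance of $\dif(\B)$ instead.
\end{enumerate}

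For parts 1 and 2, take a minimal affine state-space representation $x(t+1) = Ax(t)+Bu(t)+E$, $w = Cx+Du+F$, as in \cite{Padoan2025}. Constant trajectories correspond to solutions of $(I-A)\bar x - B\bar u = E$. The difference system is the linear system $(A,B,C,D)$. Its $0$-stabilizability means that every mode of $A$ at $\lambda = 1$ (indeed every mode with $|\lambda|\geq 1$) that is unobservable from $w$ is controllable, i.e. $\rank[\,A-\lambda I \;\; B\,]$ is full on the relevant part.

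The argument then splits. If $[\,I-A\;\;B\,]$ has full row rank, a constant solution exists directly. Otherwise we decompose the state along the image of $[\,I-A\;\;B\,]$. The component of $E$ outside that image produces a polynomially (linearly) growing drift in some output-visible direction. That drift cannot be removed while preserving the past, contradicting stabilizability of differences combined with boundedness of the stabilized trajectory. Making this step representation-free requires completeness of $\B$ to obtain such a finite-dimensional description.

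For part 3, suppose the stabilized trajectories $\tilde w$ built from $w_0$ and from $\sigma w_0$ were both convergent. Their difference $\sigma\tilde w - \tilde w$ would then lie in $\dif(\B)$ and tend to $0$, pinning down a limit. However, I expect that this does not directly yield convergence to a constant, so the state-space argument above is the route I would commit to.
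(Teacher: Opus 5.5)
\textbf{There is a gap in the key step.} Your forward direction is correct. So is your reduction of the converse to exhibiting a constant $\bar w\in\B$, and you rightly locate the crux there. But that existence step is not established.

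\emph{Where it goes wrong.} Your reading of $0$-stabilizability in state-space terms is off. In a minimal representation the state is observable from $w$, so there are no unobservable modes. Your condition (``unobservable modes at $|\lambda|\geq1$ are controllable'') therefore says nothing. That is what pushes you into a second case, and the argument there fails for two reasons. First, if $v^\top[\,I-A\;\;B\,]=0$ and $v^\top E\neq0$, the drift $t\,v^\top E$ is identical in every trajectory of $\B$. It cancels in differences, so it cannot contradict a hypothesis about $\dif(\B)$. Second, there is no ``stabilized trajectory'' of $\B$ whose boundedness you may use, since producing one is exactly what the converse asks for.

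\emph{The missing idea.} The rank test characterizing $0$-stabilizability, evaluated at $\lambda=1$, \emph{is} solvability of the equilibrium equation, so your second case is vacuous. Take a minimal (state-trim, observable) representation of $\dif(\B)$. Then $0$-stabilizability forces $\rank[\,A-\lambda I\;\;B\,]=n$ for all $|\lambda|\geq1$. If this failed at $\lambda=1$ via some $v\neq0$, then $v^\top\tilde x$ would be constant along every difference trajectory. By state-trimness, some difference trajectory has $v^\top\tilde x(0)\neq0$. Observability makes $\tilde x(0)$ a function of $\tilde w$ on $t\leq0$, so stabilizing cannot change it. Observability also forces $\tilde x\to0$ whenever $\tilde w\to0$. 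So that trajectory cannot be stabilized, a contradiction.

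\emph{Conclusion.} Hence $(I-A)\bar x-B\bar u=E$ is solvable, and $\bar w:=C\bar x+D\bar u+F$ is the constant trajectory you need; your reduction then finishes the proof. This is exactly the paper's argument, written there in kernel form. For a minimal $\B=\ker_c R(\sigma)$ one has $\dif(\B)=\ker R(\sigma)$. Stabilizability gives full row rank of $R(\lambda)$ for $|\lambda|\geq1$, by the standard rank test. So $R(1)$ has a right inverse and $\bar w:=R(1)^\dagger c\in\B$.
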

Finally, for $\B\in \A^{q+k}$, we say that $\B$ is \textit{detectable} if
\[
(w_1,c), (w_2,c)\in \B \ \ \implies \ \ \lim_{t\to\infty} (w_1(t)-w_2(t)) = 0.
\]
That is, if two trajectories of $\B$ have the same values in the control variables, then the trajectories contract. Detectability of an affine system turns out to be equivalent to the existence of an offset stable, implementable affine reference. 
\begin{lemma}[Equivalent conditions for detectability]\label{lem: equivalent conditions for detect. of affine behavior}
	For $\B\in \A^{q+k}$, the following are equivalent:
	\begin{enumerate}[(i)]
		\item $\B$ is detectable;
		\item $\dif(\B)$ is detectable;
		\item there exists an offset stable $\A^k$-implementable $\T\in \A^q$.
	\end{enumerate}
\end{lemma}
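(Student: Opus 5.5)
We prove (i)$\Leftrightarrow$(ii), then (i)$\Rightarrow$(iii), then (iii)$\Rightarrow$(i).

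\textbf{(i)$\Leftrightarrow$(ii).} By Lemma~\ref{lem: lemma about difference behaviors of affine systems}, $\dif(\B)$ consists exactly of the differences $(w_1-w_2,\,c_1-c_2)$ with $(w_i,c_i)\in\B$.

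Assume (i), and let $(w,0)\in\dif(\B)$. Write it as $(w_1-w_2,\,c_1-c_2)$ with $c_1=c_2=:c$. Then $(w_1,c),(w_2,c)\in\B$, so $w=w_1-w_2\to 0$. Now take any $(v_1,c),(v_2,c)\in\dif(\B)$. By linearity of $\dif(\B)$, $(v_1-v_2,0)\in\dif(\B)$, so $v_1-v_2\to0$. Hence (ii) holds.

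Conversely, assume (ii). If $(w_1,c),(w_2,c)\in\B$, then $(w_1-w_2,0)\in\dif(\B)$, so $(w_1-w_2,0)$ and $(0,0)$ are two trajectories of $\dif(\B)$ with equal control variables. Detectability of $\dif(\B)$ gives $w_1-w_2\to0$, which is (i).

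\textbf{(i)$\Rightarrow$(iii).} Pick any $(w_0,c_0)\in\B$ and take the controller $\mathcal{C}=\{c_0\}$. This $\mathcal{C}$ is a constant-offset singleton, so it lies in $\A^k$: it is time-invariant only if $c_0$ is constant, so $c_0$ must be chosen constant. Set $\T:=\B\lVert c_0$.
\begin{itemize}
\item $\T\in\A^q$: it is an affine image of the affine set $\B\Join\mathcal{C}$. Time-invariance and completeness are inherited from $\B$ when $c_0$ is constant.
\item $\T$ is contractive: any $w_1,w_2\in\T$ satisfy $(w_1,c_0),(w_2,c_0)\in\B$, so detectability gives $w_1-w_2\to0$. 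By Lemma~\ref{lem: contractive iff difference behavior is stable}, $\T$ is then offset stable.
\end{itemize}

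\textbf{Main obstacle: a constant $c_0$ with $(w_0,c_0)\in\B$ may not exist.} Two ways around this, in order of preference.

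First attempt: choose the controller $\mathcal{C}=\pi_c(\B\lVert{}\ldots)$, or more directly $\mathcal{C}=c_0+\{0\}$ with $c_0$ arbitrary, and argue via Theorem~\ref{thm: affine implementability theorem}(iii). Take $\T:=w_0+\dif(\B)\lVert 0$. This $\T$ is affine and time-invariant, since $\dif(\B)\lVert0$ is a linear time-invariant complete behavior, being the projection of a complete linear one. It satisfies $w_0+\dif(\B)\lVert0\subseteq\T\subseteq\pi_w(\B)$, so it is $\A^k$-implementable by Theorem~\ref{thm: affine implementability theorem}. Its difference behavior is $\dif(\B)\lVert0$, which is $0$-stable by (ii). Lemma~\ref{lem: contractive iff difference behavior is stable} then gives offset stability.

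The delicate point is time-invariance of $w_0+\dif(\B)\lVert0$. We need $\sigma w_0-w_0\in\dif(\B)\lVert0$, which requires choosing $w_0$ with a constant control component. Fallback: use the implementing controller $\mathcal{C}=\pi_c(\B)$ restricted appropriately, and invoke Theorem~\ref{thm: affine implementability theorem} directly for existence of \emph{some} affine controller. This shifts the burden to showing that $\T$ is a genuine element of $\A^q$, which I would verify by exhibiting it as $\B\lVert\mathcal{C}$ for a time-invariant affine $\mathcal{C}$.

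\textbf{(iii)$\Rightarrow$(i).} Let $\mathcal{C}\in\A^k$ implement an offset stable $\T$, and fix $(w_0,c_0)\in\B\Join\mathcal{C}$. By Theorem~\ref{thm: affine implementability theorem}, $w_0+\dif(\B)\lVert0\subseteq\T$, hence $\dif(\B)\lVert0\subseteq\dif(\T)$. By Lemma~\ref{lem: contractive iff difference behavior is stable}, $\dif(\T)$ is $0$-stable. Therefore every $w$ with $(w,0)\in\dif(\B)$ tends to $0$, which is (ii) by the argument in the first step. The equivalence (ii)$\Leftrightarrow$(i) then gives (i).
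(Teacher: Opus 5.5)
Your (i)$\Leftrightarrow$(ii) and (iii)$\Rightarrow$(i) are correct. Your reduction of (i)$\Rightarrow$(ii) to elements $(w,0)\in\dif(\B)$ via linearity is simpler than the paper's four-trajectory midpoint argument.

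The gap is (i)$\Rightarrow$(iii), which you flag but never close. For $(w_0,c_0)\in\B$ one has $\B\lVert c_0=w_0+\dif(\B)\lVert 0$. This set is time-invariant exactly when $(\sigma w_0,c_0)\in\B$; a constant $c_0$ suffices but is not necessary. You never produce such a pair. Your fallback via $\pi_c(\B)$ only relocates the problem, since Theorem~\ref{thm: affine implementability theorem} presupposes $\T\in\A^q$.

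In fact the step cannot be completed as stated. An offset stable $\T$ contains a constant trajectory, and $\T=\B\lVert\mathcal{C}\subseteq\pi_w(\B)$. So (iii) forces $\pi_w(\B)$ to contain a constant trajectory, and detectability does not provide one. Take $q=k=1$ and let $\B$ be defined by $w(t+1)-w(t)=1$ and $w(t)=c(t)$ for all $t\in\Z$. Then $\B\in\A^2$ is detectable, because $c$ determines $w$. But $\pi_w(\B)=\{t\mapsto t+a : a\in\R\}$ contains no constant trajectory, so no offset stable reference exists.

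The paper's own proof of (ii)$\Rightarrow$(iii) has exactly this hole. It sets $\T:=\B\lVert c_0$ for an arbitrary $(w_0,c_0)\in\B$ and asserts that $\T\in\A^q$ and that $\{c_0\}$ is an admissible controller. In the example above, however, $\B\lVert c_0=\{c_0\}$ with $c_0(t)=t+a$, which is not time-invariant.

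What survives is your first attempt under an additional hypothesis. Suppose $(\bar w,c_0)\in\B$ with $\bar w$ constant, and set $\T:=\bar w+\dif(\B)\lVert 0$.
\begin{itemize}
\item $\T$ is nonempty, time-invariant and complete.
\item $\bar w+\dif(\B)\lVert 0\subseteq\T\subseteq\pi_w(\B)$, because $(\bar w+\tilde w,c_0)\in\B$ for every $(\tilde w,0)\in\dif(\B)$. Hence $\T$ is $\A^k$-implementable by Theorem~\ref{thm: affine implementability theorem}.
\item $\T$ is $\bar w$-stable, since detectability makes $\dif(\B)\lVert 0$ a $0$-stable behavior.
\end{itemize}
Hence (iii) is equivalent to (i) together with the existence of a constant trajectory in $\pi_w(\B)$. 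That extra condition holds automatically, e.g., when $\pi_w(\B)$ is offset stabilizable. The condition to aim for is a constant to-be-controlled component $\bar w$, not a constant control component.
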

Note that this reference need not be \textit{regularly} implementable however. As illustrated in Example~\ref{exam: stabilization of linear behavior by non-regular interconnection}, this may yield trivial results. In order to obtain a contractive, \textit{regularly} implementable affine reference, we need an additional stabilizability condition. Before presenting the main result of this section, we provide representation-based tests for detectability and offset stabilizability. 

\begin{lemma}\label{cor: tests for detectability/contractibility}
	Let $\B\in \A^{q+k}$. The following are equivalent:
	\begin{enumerate}[(i)]
		\item $\B$ is detectable and $\pi_w(\B)$ is offset stabilizable;
		\item $\B$ admits an offset kernel representation of the form
		\[
		\B = \ker_{\footnotesize\begin{bmatrix}\eta_1 \\ \eta_2\end{bmatrix}} \begin{bmatrix}R_{11}(\sigma) & R_{12}(\sigma) \\ R_{21}(\sigma) & 0\end{bmatrix}
		\]
		where $\rank R_{12}(\lambda) = k$ and $\rank R_{21}(\lambda)$ is constant for all $\lambda\in \C$ with $|\lambda|\geq 1$;
		\item $\B$ admits a state-space representation with matrices $(A, B, C, D, E, F)$ where $(C, A)$ is detectable and $(A, B)$ is stabilizable.
	\end{enumerate}
\end{lemma}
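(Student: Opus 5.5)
The plan is to reduce everything to the difference behavior and then invoke the known linear results. First, Lemma~\ref{lem: equivalent conditions for detect. of affine behavior} shows that $\B$ is detectable if and only if $\dif(\B)$ is detectable. Next, projection commutes with taking differences: $\dif(\pi_w(\B)) = \pi_w(\B)-\pi_w(\B) = \pi_w(\B-\B) = \pi_w(\dif(\B))$. Combined with Lemma~\ref{lem: offset stabilizable iff dif. behavior stabilizable}, this gives that $\pi_w(\B)$ is offset stabilizable if and only if $\pi_w(\dif(\B))$ is $0$-stabilizable. Hence (i) is equivalent to the statement that $\dif(\B)\in\mathcal{L}^{q+k}$ is detectable and $\pi_w(\dif(\B))$ is $0$-stabilizable. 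This is exactly the hypothesis of the linear results (cf.\ \cite[Theorem 6]{Belur2002} and the associated kernel characterizations).

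For (i)$\Leftrightarrow$(ii), I would take a kernel representation of $\dif(\B)$ of the form used in the linear theory. Detectability of $\dif(\B)$ means $w$ is "observable modulo stable dynamics" from $c$. Following the standard elimination, choose a unimodular $U$ so that
\[
U(\xi)R(\xi) = \begin{bmatrix}R_{11} & R_{12}\\ R_{21} & 0\end{bmatrix},
\]
with $R_{12}$ of full row rank. Then:
\begin{itemize}
\item $\ker R_{21}(\sigma) = \pi_w(\dif(\B))$ by the elimination theorem, provided $R_{12}$ is chosen with full row rank.
\item Stabilizability of this kernel behavior is equivalent to $\rank R_{21}(\lambda)$ being constant for $|\lambda|\geq 1$.
\item Detectability is equivalent to $\rank R_{12}(\lambda)$ being full for $|\lambda|\geq1$. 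Here I expect some care with matching the stated rank "$=k$"; it presumably encodes that $c\mapsto$ unobservable part is trivial outside the unit disc.
\end{itemize}
To pass to the affine case, pick any $w_0\in\B$ and set $\eta := U(\sigma)R(\sigma)w_0$. Since $\B = w_0+\dif(\B)$ by Lemma~\ref{lem: lemma about difference behaviors of affine systems}, we get $\B=\ker_\eta UR(\sigma)$. It remains to argue that $\eta$ is constant, using time-invariance: $\sigma w_0 - w_0\in\dif(\B)$, so $UR(\sigma)w_0$ is shift invariant. Conversely, given (ii), $\dif(\B)=\ker UR(\sigma)$ and the rank conditions yield the linear properties.

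For (i)$\Leftrightarrow$(iii), work analogously. Take a minimal state representation $(A,B,C,D)$ of $\dif(\B)$ in which $c$ plays the role of the input/output pair and $w$ is expressed through the state. The classical linear fact gives detectability of $(C,A)$ and stabilizability of $(A,B)$ exactly when $\dif(\B)$ is detectable and $\pi_w(\dif(\B))$ is stabilizable. The affine realization results of \cite{Padoan2025} then provide constant offsets $E,F$ realizing $\B$ with the same $(A,B,C,D)$, since affine realizations are linear realizations of $\dif(\B)$ plus constant offsets. The converse direction follows by subtracting two trajectories.

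\textbf{Main obstacle.} I expect the main obstacle to be the precise linear kernel characterization in (ii): confirming that "$\rank R_{12}(\lambda)=k$" captures detectability, and constant rank of $R_{21}$ captures stabilizability of the projection. My first attempt would be to cite the linear analogues in \cite{Belur2002}, and otherwise derive them via a Smith form of $R_{12}$.
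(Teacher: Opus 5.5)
\textbf{Overall.} Your reduction to $\dif(\B)$ (via Lemmas~\ref{lem: equivalent conditions for detect. of affine behavior} and~\ref{lem: offset stabilizable iff dif. behavior stabilizable}, together with $\dif(\pi_w(\B))=\pi_w(\dif(\B))$) follows the paper's route. So do the unimodular compression of the $c$-coefficient, the elimination step $\pi_w(\dif(\B))=\ker R_{21}(\sigma)$, and the constant-rank test for stabilizability. Your time-invariance argument that $\eta$ is constant is a clean substitute for citing \cite[Theorem 4]{Padoan2025}.

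\textbf{The gap.} The step you flagged as the main obstacle is not merely delicate; it is false. Detectability of $\dif(\B)$ says that $(w,0)\in\dif(\B)$ implies $w(t)\to 0$. Equivalently, $\ker\begin{bmatrix}R_{11}(\sigma)\\ R_{21}(\sigma)\end{bmatrix}$ is $0$-stable, i.e.\ $\rank\begin{bmatrix}R_{11}(\lambda)\\ R_{21}(\lambda)\end{bmatrix}=q$ for all $|\lambda|\geq 1$. In the Polderman--Willems detectability test, the rank condition sits on the coefficient of the variable being \emph{detected} (here $w$), not on the coefficient $R_{12}$ of the variable $c$ one detects from. No condition on $R_{12}$ alone can capture it.

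\emph{Counterexample to (ii)$\Rightarrow$(i).} Take $q=k=1$ and $\B=\{(w,c): w(t+1)=2w(t)+c(t)\}=\ker_0 R(\sigma)$ with $R(\xi)=\begin{bmatrix}\xi-2 & -1\end{bmatrix}$. This representation already has the shape in (ii): $R_{12}=-1$ has rank $k$ everywhere and $R_{21}$ is empty. (Append an independent component $w^b(t+1)=\tfrac12 w^b(t)$ if you want $R_{21}$ nonempty.) Yet $(2^t,0)$ and $(0,0)$ both lie in $\B$, so $\B$ is not detectable.

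\emph{Counterexample to (i)$\Rightarrow$(ii).} Take $q=1$, $k=2$ and $\B=\{(w,c_1,c_2): c_1=w\}$, which satisfies (i). Since $c_2$ is free and decoupled, every kernel representation has a zero $c_2$-column. Hence $\rank R_{12}(\lambda)\leq 1<k$. So (i)$\Leftrightarrow$(ii) as stated cannot be established by any argument.

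\textbf{Relation to the paper's proof.} The paper's proof makes exactly the same assertion (``$\dif(\B)$ is detectable iff $\rank R_{12}(\lambda)=k$'', citing Polderman--Willems). Your proposal therefore reproduces its argument, including the error; what needs fixing is the statement of (ii) itself. Two amendments are needed:
\begin{enumerate}
\item Replace $\rank R_{12}(\lambda)=k$ by $\rank\begin{bmatrix}R_{11}(\lambda)\\ R_{21}(\lambda)\end{bmatrix}=q$ for $|\lambda|\geq 1$.
\item Require in (ii) that $R_{12}$ have full row rank over $\R[\xi]$.
\end{enumerate}
The second amendment is needed for your converse (``given (ii), \dots''), which silently uses $\pi_w(\dif(\B))=\ker R_{21}(\sigma)$. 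That identity can fail for an arbitrary representation of the stated shape. For example, take $\B=\{(w,c): c=w,\ w(t+1)=2w(t)\}$ and place both rows $\begin{bmatrix}1 & -1\end{bmatrix}$ and $\begin{bmatrix}\xi-2 & 0\end{bmatrix}$ in the top block. This gives an empty $R_{21}$, with $\rank R_{12}(\lambda)=1=k$. Yet $\pi_w(\B)=\ker(\sigma-2)$ is not offset stabilizable.

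With these two amendments your argument for (i)$\Leftrightarrow$(ii) goes through. Your sketch of (i)$\Leftrightarrow$(iii) is at the same level as the paper's. Both rely on the specific plant structure of \cite[Section V]{Belur2002} to fix the roles of $w$ and $c$ in the realization.
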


With these conditions in place, we can state the main result of this section.

\begin{theorem}\label{thm: partial interconnection result about contraction by regular interconnection}
	For $\B\in \A^{q+k}$, the following are equivalent:
	\begin{enumerate}[(i)]
		\item $\B$ is detectable and $\pi_w(\B)$ is offset stabilizable;
		\item\label{item:lin cont}  there exists a regularly $\mathcal{L}^k$-implementable, offset stable reference $\T\in \A^q$.
	\end{enumerate}
	Moreover, if (i) holds and $(\bar w, \bar c)\in \B$ is a constant trajectory, then there exists a regularly $\A^k$-implementable, $\bar w$-stable $\T\in \A^q$.
\end{theorem}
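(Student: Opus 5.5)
The plan is to reduce everything to the linear result \cite[Theorem 6]{Belur2002} through the difference behavior, and then translate back to the affine setting using Lemma~\ref{lem: lemma about difference behaviors of affine systems}, Lemma~\ref{lem: contractive iff difference behavior is stable}, Lemma~\ref{lem: offset stabilizable iff dif. behavior stabilizable} and Lemma~\ref{lem: equivalent conditions for detect. of affine behavior}. The linear theorem says: for $\B'\in\mathcal{L}^{q+k}$, there exists a regularly $\mathcal{L}^k$-implementable $0$-stable reference if and only if $\B'$ is detectable and $\pi_w(\B')$ is $0$-stabilizable. First we note that $\pi_w(\dif(\B))=\dif(\pi_w(\B))$, since projection is linear. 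By Lemma~\ref{lem: equivalent conditions for detect. of affine behavior} and Lemma~\ref{lem: offset stabilizable iff dif. behavior stabilizable}, condition (i) is then equivalent to: $\dif(\B)$ is detectable and $\pi_w(\dif(\B))$ is $0$-stabilizable.

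\emph{(i)$\Rightarrow$(ii).} Apply the linear theorem to $\dif(\B)$ to get $\mathcal{C}\in\mathcal{L}^k$ with $\dif(\B)\lVert\mathcal{C}$ $0$-stable and the interconnection regular. Since $\mathcal{C}$ is linear, $0\in\mathcal{C}$. Pick $(w_0,c_0)\in\B$ and, using linearity of $\mathcal{C}$, write $c_0=c_0'+c_0''$ with $c_0'\in\mathcal{C}$. Then $\B\Join\mathcal{C}$ is nonempty, which requires exhibiting $(w,c)\in\B$ with $c\in\mathcal{C}$.

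This is the delicate point. Regularity of the linear interconnection implies that $\pi_c(\dif(\B))+\mathcal{C}=\pi_c(\dif(\B)\text{ extended})$; more precisely, the standard characterization of regularity gives $\dif(\B)$-compatibility, $\pi_c(\dif(\B))+\mathcal{C}\supseteq$ all of the relevant behavior. I would aim to show $\pi_c(\B)\cap\mathcal{C}\neq\emptyset$, using that $\pi_c(\B)=c_0+\pi_c(\dif(\B))$ and that regularity forces $\pi_c(\dif(\B))+\mathcal{C}$ to contain $\pi_c(\B)-\pi_c(\B)$ translates. If this fails directly, I would instead modify $\mathcal{C}$ by choosing it via a regular feedback-type construction (e.g.\ in a state-space realization from Lemma~\ref{cor: tests for detectability/contractibility}(iii), an observer-based controller written on the control variables), whose interconnection with any affine $\B$ is always nonempty because it is a well-posed feedback loop.

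Once $(w,c)\in\B\Join\mathcal{C}$ exists, Lemma~\ref{lem: lemma about difference behaviors of affine systems} gives $\B\Join\mathcal{C}=(w,c)+\dif(\B)\Join\mathcal{C}$. Hence $\T:=\B\lVert\mathcal{C}=w+\dif(\B)\lVert\mathcal{C}$ is affine with $\dif(\T)=\dif(\B)\lVert\mathcal{C}$, which is $0$-stable, so $\T$ is offset stable by Lemma~\ref{lem: contractive iff difference behavior is stable}. Regularity transfers because input cardinalities are preserved under $\dif$ (the Remark after the definition of regular interconnection).

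\emph{(ii)$\Rightarrow$(i).} Given $\mathcal{C}\in\mathcal{L}^k$ regularly implementing an offset stable $\T$, the same translation shows that $\dif(\B)\lVert\dif(\mathcal{C})=\dif(\T)$ is $0$-stable and the interconnection is regular, with $\dif(\mathcal{C})=\mathcal{C}$. The linear theorem then yields (i) via the equivalences above.

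\emph{Moreover part.} Take $\mathcal{C}$ as above and set $\mathcal{C}':=\bar c+\mathcal{C}\in\A^k$. Then $(\bar w,\bar c)\in\B\Join\mathcal{C}'$, so nonemptiness is automatic; this is exactly the step that was hard in the linear case. By Lemma~\ref{lem: lemma about difference behaviors of affine systems}, $\B\lVert\mathcal{C}'=\bar w+\dif(\B)\lVert\mathcal{C}$, which is offset stable and contains the constant $\bar w$; by uniqueness of the constant trajectory it is $\bar w$-stable. Regularity follows as before. The main obstacle is the nonemptiness of $\B\Join\mathcal{C}$ for a linear $\mathcal{C}$ in (i)$\Rightarrow$(ii). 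I would handle it via the regularity characterization, namely that a regular controller's output cardinality does not exceed the number of free variables it constrains, so its constraints are always satisfiable.
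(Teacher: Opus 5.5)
\textbf{There is a gap in (i)$\Rightarrow$(ii).} Your (ii)$\Rightarrow$(i) and the ``moreover'' part are sound and follow the paper's route. Your controller $\bar c+\mathcal{C}$ is exactly the paper's $\ker_{C(1)\bar c}C(\sigma)$, since $C(\sigma)\bar c=C(1)\bar c$ for constant $\bar c$.

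The gap is the step you yourself flag: showing that $\B\Join\mathcal{C}\neq\emptyset$ for the \emph{linear} controller $\mathcal{C}$ supplied by \cite[Theorem 6]{Belur2002}. Beyond the linear theorem, this is the only real content of that direction, and none of your three suggestions establishes it.
\begin{itemize}
\item Writing $c_0=c_0'+c_0''$ with $c_0'\in\mathcal{C}$ is vacuous (take $c_0'=0$). It only helps if you also know $c_0''\in\pi_c(\dif(\B))$. The containment of $\pi_c(\B)-\pi_c(\B)=\pi_c(\dif(\B))$ in $\pi_c(\dif(\B))+\mathcal{C}$ is trivial and says nothing about reaching the offset $c_0$.
\item The observer-based fallback is only a sketch. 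You would still have to show that this controller is linear, regular and stabilizing in the partial-interconnection setting, which amounts to re-proving the linear theorem.
\item The closing heuristic (a regular controller only constrains free variables, so its constraints are satisfiable) is not an argument. A plain count $\bm p(\mathcal{C})\leq \bm m(\B)$ does not give satisfiability. For $\B=\{(w,c_1,c_2): w=c_2,\ c_1=1\}$ and $\mathcal{C}=\{(c_1,c_2): c_1=0\}$ we have $\bm p(\mathcal{C})=1=\bm m(\B)$, yet $\B\Join\mathcal{C}=\emptyset$. Making ``only constrains free variables'' precise is exactly the missing step.
\end{itemize}

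\textbf{What is missing} is a consequence of regularity that guarantees solvability of the \emph{inhomogeneous} equations. The paper obtains it as follows:
\begin{itemize}
\item Take minimal kernel representations $\dif(\B)=\ker\,[R_1(\sigma)\ R_2(\sigma)]$ and $\mathcal{C}=\ker C(\sigma)$.
\item Regularity of $(\dif(\B),\mathcal{C})$ gives $\bm p(\dif(\B)\Join\mathcal{C})=\bm p(\dif(\B))+\bm p(\mathcal{C})$. Hence $\left[\begin{smallmatrix}R_1 & R_2\\ 0 & C\end{smallmatrix}\right]$ has full row rank over $\R[\xi]$.
\item By \cite[Lemma P.2]{Willems1991}, the associated difference operator is then surjective.
\item So the system $R(\sigma)(w,c)=\eta$, $C(\sigma)c=0$ has a solution, which is a trajectory in $\B\Join\mathcal{C}$.
\end{itemize}

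Alternatively, you can rescue your decomposition idea representation-free. First prove that regularity of $(\dif(\B),\mathcal{C})$ is equivalent to $\pi_c(\dif(\B))+\mathcal{C}=(\R^k)^\Z$. This follows from the rank identity $\bm p(\B_1\cap\B_2)+\bm p(\B_1+\B_2)=\bm p(\B_1)+\bm p(\B_2)$ for linear behaviors, applied with $\B_1=\dif(\B)$ and $\B_2=(\R^q)^\Z\times\mathcal{C}$. Then write $c_0=d+c'$ with $d\in\pi_c(\dif(\B))$ and $c'\in\mathcal{C}$, so that $c'=c_0-d\in\pi_c(\B)\cap\mathcal{C}$.

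Either way, this step has to be proved rather than gestured at. Once it is, the rest of your argument goes through.
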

An interesting detail of Theorem~\ref{thm: partial interconnection result about contraction by regular interconnection} can be found in (\ref{item:lin cont}): if $\B\in \A^{q+k}$ is detectable and $\pi_w(\B)$ is offset stabilizable, we can always make the system offset stable (or equivalently, contractive (cf. Lemma~\ref{lem: contractive iff difference behavior is stable})) using a \textit{linear} controller. On the other hand, using affine controllers allows us to choose the equilibrium to which resulting closed-loop system converges. 

\section{Conclusion}
In this paper, we investigated stability, contraction, stabilizability, and controllers for affine systems. To be precise, we investigated foundational, representation-free existence results. After defining contraction and offset stability for autonomous affine systems, we showed their equivalence, giving rise to tests in terms of the difference behavior. After this, we generalized the use of quadratic difference forms from their use in stability analysis of linear systems towards playing a role as quadratic contraction forms. Moving our attention to control, we provided implementability conditions for the affine case, and derived a characterization for the existence of a regularly implementable offset stable reference. Interestingly, for this linear controllers are sufficient, and we require affine controllers only to place the fixed point of the closed-loop system. Our results show that the behavioral framework is a natural home for stability analysis of affine systems, with the difference behavior serving as the bridge to linear theory.

\bibliography{references}
\bibliographystyle{IEEEtran}

\renewcommand\thesection{\Alph{section}}
\setcounter{section}{16}
\section*{Appendix: Proofs}

\noindent\textit{Proof of Lemma~\ref{lem: contractive iff difference behavior is stable}.} The equivalence of (i) and (iii) is clear by definition, and (ii) evidently implies (i). It remains to show (i) $\Rightarrow$ (ii). Assume that $\B\in \A^q$ is contractive. Then $\B = \ker_c R(\sigma)$ for some Schur $R\in \R^{q\times q}[\xi]$ and $c\in \R^q$, cf. Corollary~\ref{cor: equivalent conditions for contractivity}. Since $R$ is Schur, $\det R(1)\neq 0$. Let $\bar w := R(1)^{-1} c$. Then $R(\sigma)\bar w = c$, so $\bar w\in \B$. Moreover, $\B - \bar w = \dif(\B)$ (cf. Lemma~\ref{lem: lemma about difference behaviors of affine systems}), so $\B-\bar w$ is $0$-stable by the equivalence of (i) and (iii). This means $\B$ is $\bar w$-stable as desired. \qed \\

\noindent\textit{Proof of Corollary~\ref{cor: equivalent conditions for contractivity}.} 
We have
\begin{align*}
	\text{\small $\B$ is contractive} \ \ &\xLeftrightarrow{\text{Lemma~\ref{lem: contractive iff difference behavior is stable}}} \ \ \text{\small $\dif(\B)$ is $0$-stable} \\
	&\xLeftrightarrow{\text{\cite[Lemma 2]{Kojima2006}}}
	\begin{array}{c}
		\text{\small $\dif(\B) = \ker R(\sigma)$ for some} \\
		\text{\small Schur $R\in \R^{q\times q}[\xi]$}
	\end{array} \\
	&\xLeftrightarrow{\text{\cite[Theorem 4]{Padoan2025}}}
	\begin{array}{c}
		\text{\small $\B = \ker_c R(\sigma)$ for some} \\
		\text{\small Schur $R\in \R^{q\times q}[\xi]$, $c\in \R^q$},
	\end{array}
\end{align*}
which shows (i) $\Leftrightarrow$ (ii). The equivalence (i) $\Leftrightarrow$ (iii) follows similarly:
\begin{align*}
	\text{\small $\B$ is contractive} \ \ &\xLeftrightarrow{\text{Lemma~\ref{lem: contractive iff difference behavior is stable}}} \ \ \text{\small $\dif(\B)$ is $0$-stable} \\
	&\xLeftrightarrow{\text{\hspace{1cm}}}
	\begin{array}{c}
		\text{\footnotesize $\dif(\B)$ admits a state-space representation} \\
		\text{\footnotesize $(A, C)$ with $A$ Schur}
	\end{array} \\
	&\xLeftrightarrow{\text{\cite[Theorem 6]{Padoan2025}}}
	\begin{array}{c}
		\text{\footnotesize $\B$ admits a state-space representation} \\
		\text{\footnotesize $(A,C,E,F)$ with $A$ Schur},
	\end{array}
\end{align*}
where the second equivalence holds since we can take a minimal state-space representation of $\dif(\B)$ such that $A$ is Schur, cf. \cite[Lemma 3.1]{Valcher2002}. \qed \\

\noindent\textit{Proof of Corollary~\ref{cor: contractive iff there exists Lyapunov function for dif(B)}.} We begin the proof with two observations. The equivalence (i) $\Leftrightarrow$ (iv) follows from Theorem~\ref{thm: linear is stable iff there exist Lyapunov function} and Lemma~\ref{lem: contractive iff difference behavior is stable}. Moreover, (ii) and (iv) are equivalent by definition. We conclude the proof by showing two additional implications. 

(ii) $\Rightarrow$ (iii). Let $\Phi$ be such that (ii) holds. By the previous, we know that if (ii) holds, then $\B$ is contractive, and by Lemma~\ref{lem: contractive iff difference behavior is stable}, we can conclude that there exists a unique constant trajectory $\bar{w}\in\B$. Let $w\in\B$, and apply the conditions of (ii) with $w_1=w$ and $w_2=\bar{w}$. Note that $\nabla Q_{\Phi}(w-\bar{w})(t)$ can be written as 
\[ \begin{bmatrix} w_{[t, t+L]}\\ 1\end{bmatrix}^\top {\footnotesize\begin{bmatrix} \nabla \Phi & -\bar{w}_{[t,t+L]}^\top \nabla \Phi \\ -\nabla \Phi \bar{w}_{[t,t+L]} & \bar{w}_{[t,t+L]}^\top \nabla \Phi\bar{w}_{[t,t+L]} \end{bmatrix}} 
\begin{bmatrix} w_{[t, t+L]}\\1 \end{bmatrix} \] 
By defining the middle matrix as $\Psi$, the result follows. 

(iii)$\Rightarrow$(iv):
Let $\tilde{w}\in \dif(\B)$. Then $-\tilde w\in \dif(\B)$ as well, so $w_\pm := \bar w \pm \tilde w\in \B$. Applying the given condition on $\Psi$ to $w_+$ and $w_-$, a straightforward computation yields
\[
\nabla Q_{\Phi}(\pm \tilde w)(t) \leq \pm 2 \bar w_{[t, t+L]}^\top \nabla \Phi \tilde w_{[t, t+L]} \pm 2\tilde w^\top_{[t, t+L]} \Psi_{12}\]
for all $t\in \Z$.
Observing that $\nabla Q_{\Phi}(\tilde w) = \nabla Q_{\Phi}(-\tilde w)$, it follows by adding these two inequalities that $\nabla Q_{\Phi}(\tilde w)(t)\leq 0$ for all $t\in \Z$. Moreover, we see that if $\nabla Q_{\Phi}(\tilde w) = 0$, then 
\[
\footnotesize\begin{bmatrix} (w_+)_{[t, t+L]} \\ 1\end{bmatrix}^\top \Psi\begin{bmatrix} (w_+)_{[t, t+L]} \\ 1\end{bmatrix} = 0 
\]
for all $t\in \Z$. Therefore $w_+ = \bar w$, i.e., $\tilde w = 0$. This shows that $Q_{\Phi}$ is a Lyapunov function for $\dif(\B)$. \qed \\
	
	\noindent\textit{Proof of Theorem~\ref{thm: affine implementability theorem}.} We show that (i) and (iii) are equivalent, and that (ii) and (iii) are equivalent.
	
	(i) 9$\Rightarrow$ (iii). Suppose $\mathcal{C}\in \A^k$ implements $\T$. It is clear that $\T\subseteq \pi_w(\B)$. Let $\tilde w\in \dif(\B)\lVert 0$. Then there exist $(w_i,c_i)\in \B$, $i\in \{1,2\}$, such that $(\tilde w,0) = (w_1 - w_2, c_1-c_2)$. Since $w_0\in \T=\B\lVert \mathcal{C}$, we have $(w_0, c_0)\in \B$ for some $c_0\in \mathcal{C}$. Hence
	\[
	(w_0+\tilde w,c_0) = (w_0,c_0) + (w_1,c_1) - (w_2,c_2)\in \B
	\]
	because $\B$ is affine. Therefore $w_0 + \tilde w\in \T$, which shows that (iii) holds.
	
	(iii) $\Rightarrow$ (i). Assume that $\T\in \A^q$ satisfies (iii). Let $\B = \ker_\eta R(\sigma)$ with $R\in \R^{p\times (q+k)}[\xi], \eta\in \R^p$ be an offset kernel representation of $\B$, and partition $R$ as $R = \begin{bmatrix}R_1 & R_2\end{bmatrix}$ with $R_1\in \R^{p\times q}[\xi]$ and $R_2\in \R^{p\times k}[\xi]$. Then $\dif(\B) = \ker R(\sigma)$ (cf. \cite[Theorem 4]{Padoan2025}), and therefore $\dif(\B)\lVert0 = \ker R_1(\sigma)$. Since $\dif(\B)\lVert 0\subseteq \dif(\T)$ by (iii), it follows from \cite[Lemma 14]{Willems2002} that $\dif(\T) = \ker (XR_1)(\sigma)$ for some $X\in \R^{\ell\times p}[\xi]$. Therefore $\T = \ker_{\zeta}(XR_1)(\sigma)$ for some $\zeta\in \R^\ell$, see \cite[Theorem 4]{Padoan2025}. Now let
	\[
	\mathcal{C} := \ker_{X(1)\eta - \zeta} (XR_2)(\sigma)\in \A^k.
	\]
	It is then easily verified that indeed $\T = \B\lVert \mathcal{C}$, proving that $\T$ is $\A^k$-implementable. 
	
	(ii) $\Rightarrow$ (iii). Let $\tilde w\in \dif(\B)\lVert 0$. Then $(\tilde w,0) = (w_1,c_1) - (w_2,c_2)$ for some $(w_1,c_1),(w_2,c_2)\in \B$. Hence
	\[
	(w_0+\tilde w,c_0) = (w_0,c_0) + (w_1,c_1)-(w_2,c_2)\in \B
	\]
	since $\B$ is affine. In particular, $w_0 + \tilde w\in \B\lVert c_0 \subseteq \T$ as required.
	
	(iii) $\Rightarrow$ (ii). Let $w\in \B\lVert c_0$. Then $(w,c_0)\in \B$. This means that $(w-w_0,0)\in \dif(\B)$, so $w-w_0\in \dif(\B)\lVert 0$. Hence $w\in w_0 + \dif(\B)\lVert 0\subseteq \T$. \qed \\
	
	\noindent\textit{Proof of Lemma~\ref{lem: offset stabilizable iff dif. behavior stabilizable}.} It is straightforward to verify that offset stabilizability of $\B$ implies $0$-stabilizability of $\dif(\B)$. Conversely, assume that $\dif(\B)$ is $0$-stabilizable. Let $\B = \ker_c R(\sigma)$ be a minimal offset kernel representation with $R\in \R^{\bm p(\B)\times q}[\xi]$, $c\in \R^{\bm p(\B)}$. Then $\dif(\B) = \ker R(\sigma)$, and since $\dif(\B)$ is $0$-stabilizable, $\rank R(\lambda) = \bm p(\B)$ for all $\lambda\in \C$ with $|\lambda|\geq 1$, see \cite{Eising2025}. Hence $R(1)$ has full row rank. Let $R(1)^\dagger$ be any right inverse of $R(1)$, and let $\bar w := R(1)^\dagger c$. Then $R(\sigma)\bar w = c$, so $\bar w\in \B$. Therefore $\dif(\B) = \B-\bar w$ (cf. Lemma~\ref{lem: lemma about difference behaviors of affine systems}), from which it is easy to see that $\B$ is $\bar w$-stabilizable as desired. \qed \\
	
	\noindent\textit{Proof of Lemma~\ref{lem: equivalent conditions for detect. of affine behavior}.} We prove (i) $\Rightarrow$ (ii) $\Rightarrow$ (iii) $\Rightarrow$ (i).
	
	(i) $\Rightarrow$ (ii). Let $(\bar w_1, \bar c), (\bar w_2,\bar c)\in \dif(\B)$. Then there exist $w_i\in (\R^q)^\Z$ and $c_i\in (\R^k)^\Z$ such that
	\[
	(\bar w_1, \bar c) = (w_1 - w_2, c_1-c_2), \ \ (\bar w_2, \bar c) = (w_3 - w_4, c_3 - c_4),
	\]
	and $(w_i, c_i)\in \B$, $i\in \{1,2,3,4\}$. By the equalities above, we have $c_1 + c_4 = c_2 + c_3 =: c$. Since $\B$ is affine, it follows that the midpoints $\frac12(w_1+w_4, c)$, $\frac12(w_2+w_3, c)$ belong to $\B$. As $\B$ is detectable, this yields 
	\begin{align*}
		0 &= \lim_{t\to\infty} (w_1(t) + w_4(t) - (w_2(t) + w_3(t))) \\
		&= \lim_{t\to\infty} (\bar w_1(t) - \bar w_2(t)),
	\end{align*}
	which proves that $\dif(\B)$ is detectable.
	
	(ii) $\Rightarrow$ (iii). Let $(w_0, c_0)\in \B$, and choose $\T := \B\lVert c_0$. Then $\T\in \A^q$, and $\T$ is clearly $\A^k$-implementable. Now let $w_1,w_2\in \T$. Then $(w_1,c_0),(w_2,c_0)\in \B$, so $w_1-w_2\in \dif(\B)\lVert 0$. Because $0\in\dif(\B)$, it follows that $\lim_{t\to\infty} (w_1(t) - w_2(t)) = 0$, which shows that $\T$ is contractive. Hence $\T$ is offset stable by Lemma~\ref{lem: contractive iff difference behavior is stable}.
	
	(iii) $\Rightarrow$ (i). Let $(w_1, c), (w_2,c)\in \B$, and suppose that $\T\in \A^q$ is offset stable and $\A^k$-implementable. Pick any $w_0\in \T$, and observe that $w_1-w_2\in \dif(\B)\lVert 0$. Hence $w_0 + w_1 - w_2 \in \T$ by Theorem~\ref{thm: affine implementability theorem}. Therefore
	\[
	\lim_{t\to\infty} (w_0(t) + w_1(t) - w_2(t)) - w_0(t))  = 0
	\]
	since $\T$ is also contractive (cf. Lemma~\ref{lem: contractive iff difference behavior is stable}). Hence $\B$ is detectable and the proof is complete. \qed \\
	
	\noindent\textit{Proof of Corollary~\ref{cor: tests for detectability/contractibility}.} (i) $\Leftrightarrow$ (ii). Let $\ker \begin{bmatrix}R_1(\sigma) & R_2(\sigma)\end{bmatrix}$ be a minimal kernel representation of $\dif(\B)$ as in the proof of Theorem~\ref{thm: partial interconnection result about contraction by regular interconnection}. By pre-multiplying $R_2$ with a suitable unimodular matrix $U\in \R^{\bm p(\B)\times \bm p(\B)}[\xi]$, we obtain
	\[
	\footnotesize U(\xi)R_2(\xi) = \begin{bmatrix}R_{12}(\xi) \\ 0\end{bmatrix},
	\]
	where $R_{12}$ has full row rank over $\R[\xi]$. Partition $R_1$ conformably. Then $\dif(\B)$ is represented minimally by
	\[
	\footnotesize\dif(\B) = \ker\begin{bmatrix}R_{11}(\sigma) & R_{12}(\sigma) \\ R_{21}(\sigma) & 0\end{bmatrix}.
	\]
	By \cite[Lemma P.2]{Willems1991}, the map $c\mapsto R_{12}(\sigma)c$ is surjective. Hence $\pi_w(\dif(\B)) = \ker R_{21}(\sigma)$. We thus see that $\dif(\B)$ is detectable if and only if $\rank R_{12}(\lambda) = k$ for all $\lambda\in \C$ with $|\lambda|\geq 1$ (cf. \cite{Polderman1998}), and that $\pi_w(\dif(\B))$ is $0$-stabilizable if and only if $\rank R_{21}(\lambda)$ is constant for $\lambda\in \C$ with $|\lambda|\geq 1$ (see \cite{Eising2025}). The desired result now follows from Lemma~\ref{lem: equivalent conditions for detect. of affine behavior} and Lemma~\ref{lem: offset stabilizable iff dif. behavior stabilizable}.
	
	(i) $\Leftrightarrow$ (iii). Let $(A,B,C,D)$ be the matrices of a minimal state-space representation of $\dif(\B)$. Following the same reasoning as in \cite[Section V]{Belur2002}, we see that $\dif(\B)$ is detectable if and only if $(C, A)$ is detectable, and $\pi_w(\dif(\B))$ is $0$-stabilizable if and only if $(A,B)$ is stabilizable. Using Lemma~\ref{lem: equivalent conditions for detect. of affine behavior} and Lemma~\ref{lem: offset stabilizable iff dif. behavior stabilizable}, the result follows. \qed \\
	
	\noindent\textit{Proof of Theorem~\ref{thm: partial interconnection result about contraction by regular interconnection}.} 
	(ii) $\Rightarrow$ (i). Assume that $\T\in \A^q$ is offset stable and regularly $\A^k$-implementable. Let $\mathcal{C}\in \A^k$ be a controller which regularly implements $\T$. Observe that
	\begin{multline}\label{eq: computation of difference behavior of interconnection}
		\dif(\T) = \dif(\pi_w(\B\Join \mathcal{C})) = \pi_w(\dif(\B\Join \mathcal{C})) \\
		= \pi_w(\dif(\B)\Join \dif(\mathcal{C})) = \dif(\B)\lVert \dif(\mathcal{C}),
	\end{multline}
	where we have used the linearity of the map $\pi_w$ and the fact that $\dif(A_1\cap A_2) = \dif(A_1)\cap \dif(A_2)$ if $A_1,A_2$ are affine sets and $A_1\cap A_2\neq \emptyset$. Moreover,
	\begin{multline}\label{eq: computation with i/o-cardinalities}
		\bm m(\dif(\B)\Join \dif(\mathcal{C})) = \bm m(\dif(\B\Join \mathcal{C})) = \bm m(\B\Join \mathcal{C}) \\
		= \bm m(\B) - \bm p(\mathcal{C}) = \bm m(\dif(\B)) - \bm p(\dif(\mathcal{C})),
	\end{multline}
	so we conclude that $\dif(\T)$ is $0$-stable (cf. Lemma~\ref{lem: contractive iff difference behavior is stable}) and regularly $(\mathcal{L}^k, \dif(\B))$-implementable. Hence, by \cite[Theorem 6]{Belur2002}, it follows that $\dif(\B)$ is detectable and $\pi_w(\dif(\B)) = \dif(\pi_w(\B))$ is $0$-stabilizable. That is, $\B$ is detectable and $\pi_w(\B)$ is offset stabilizable (by Lemma~|ref{lem: equivalent conditions for detect. of affine behavior} and Lemma~\ref{lem: offset stabilizable iff dif. behavior stabilizable} respectively). 
	
	(i) $\Rightarrow$ (ii). Assume that $\B$ is detectable and that $\pi_w(\B)$ is offset stabilizable. Then $\dif(\B)$ is detectable and $\dif(\pi_w(\B)) = \pi_w(\dif(\B))$ is $0$-stabilizable (cf. Lemma~\ref{lem: equivalent conditions for detect. of affine behavior} and Lemma~\ref{lem: offset stabilizable iff dif. behavior stabilizable}). By \cite[Theorem 6]{Belur2002}, there thus exists an $\bar\T\in \mathcal{L}^q$ which is $0$-stable and regularly $(\mathcal{L}^k,\dif(\B))$-implementable. Let $\bar{\mathcal{C}}\in \mathcal{L}^k$ be a controller which regularly implements $\bar\T$, and let  $\dif(\B) = \ker R(\sigma)$, $\bar{\mathcal{C}} = \ker C(\sigma)$ be minimal kernel representations, where $R\in \R^{\bm p(\B)\times (q+k)}[\xi]$ and $C\in \R^{\bm p(\bar{\mathcal{C}})\times k}[\xi]$. 
	Partition $R$ as $\begin{bmatrix}R_1 & R_2\end{bmatrix}$, where $R_1\in \R^{\bm p(\B)\times q}[\xi]$ and $R_2\in \R^{\bm p(\B)\times k}[\xi]$. We have
	\[
	\footnotesize\dif(\B)\Join \bar{\mathcal{C}} = \ker\begin{bmatrix}R_1(\sigma) & R_2(\sigma) \\ 0 & C(\sigma)\end{bmatrix},
	\]
	and since the interconnection of $\dif(\B)$ and $\bar{\mathcal{C}}$ is regular, it holds that $\bm m(\dif(\B)\Join \bar{\mathcal{C}}) = \bm m(\dif(\B)) - \bm p(\bar{\mathcal{C}})$, or equivalently, $\bm p(\dif(\B)\Join \bar{\mathcal{C}}) = \bm p(\dif(\B)) + \bm p(\bar{\mathcal{C}})$. This clearly implies that the polynomial matrix
	\[
	{\footnotesize\begin{bmatrix}R_1 & R_2 \\ 0 & C\end{bmatrix}}
	\]
	has full row rank over the ring $\R[\xi]$. Therefore the map
	\[
	\footnotesize\begin{bmatrix}w \\ c\end{bmatrix}\mapsto \begin{bmatrix} R_1(\sigma) & R_2(\sigma) \\ 0 & C(\sigma)\end{bmatrix}\begin{bmatrix}w \\ c\end{bmatrix}
	\]
	is surjective (cf. \cite[Lemma P.2]{Willems1991}). Pick $\eta\in \R^{\bm p(\B)}$ such that $\B = \ker_\eta R(\sigma)$, and choose any $\zeta\in \R^{\bm p(\B)}$. Let $\mathcal{C}_\zeta := \ker_{\zeta} C(\sigma)$. Then the set
	\[
	{\footnotesize\B \Join \mathcal{C}_\zeta = \ker_{\begin{bmatrix}\eta \\ \zeta\end{bmatrix}}\begin{bmatrix} R_1(\sigma) & R_2(\sigma) \\ 0 & C(\sigma)\end{bmatrix}},
	\]
	is nonempty. Hence $\dif(\B\lVert \mathcal{C}_\zeta) = \bar \T$ since $\dif(\mathcal{C}) = \bar{\mathcal{C}}$ (cf. \eqref{eq: computation of difference behavior of interconnection}), meaning $\B\lVert \mathcal{C}_\zeta$ is contractive (and therefore offset stable, cf. Lemma~\ref{lem: contractive iff difference behavior is stable}) and the interconnection of $\B$ and $\mathcal{C}_\zeta$ is regular (cf. \eqref{eq: computation with i/o-cardinalities}) for any choice of $\zeta\in \R^{\bm p(\bar{\mathcal{C}})}$. Choosing $\zeta = 0$, (ii) follows since $\mathcal{C}_0\in \mathcal{L}^k$.
	
	If in addition $(\bar w, \bar c)\in \B$ is a constant trajectory, then by choosing $\zeta = C(1)\bar c$ we see that $\B\lVert \mathcal{C}_\zeta$ is $\bar w$-stable. \qed \\

\end{document}